
\documentclass[final,3p,times]{elsarticle}




\usepackage{amssymb}

\usepackage{lipsum}
\usepackage{amsfonts}
\usepackage{float}
\usepackage{graphicx}
\usepackage{xcolor}
\usepackage{subfig}
\usepackage[utf8]{inputenc}
\usepackage{epstopdf}
\usepackage{nicefrac}
\usepackage{bm} 
\usepackage{algorithmic}
\ifpdf
  \DeclareGraphicsExtensions{.eps,.pdf,.png,.jpg}
\else
  \DeclareGraphicsExtensions{.eps}
\fi



\usepackage{pifont,latexsym,ifthen,calc}
\usepackage{amsthm,amssymb,amsbsy,amsmath}
\usepackage[errorshow]{tracefnt}
\usepackage{hyperref}
\usepackage{enumerate}
\def\abs#1{\left|{#1}\right|}
\newtheorem{theorem}{Theorem}
\newtheorem{remark}{Remark}
\newtheorem{lemma}{Lemma}
\newtheorem{proposition}{Proposition}
\newtheorem{definition}{Definition}


\journal{Journal of Computational Physics}

\begin{document}

\begin{frontmatter}



\title{A static memory sparse spectral method for time-fractional PDEs}


\author[inst1]{Timon S. Gutleb}
\affiliation[inst1]{organization={Mathematical Institute, University of Oxford},
            city={Oxford},
            postcode={OX2 6GG}, 
            state={England},
            country={United Kingdom}}

\author[inst1]{Jos\'e A. Carrillo}

\begin{abstract}
We discuss a method which provides accurate numerical solutions to fractional-in-time partial differential equations posed on $[0,T] \times \Omega$ with $\Omega \subset \mathbb{R}^d$ without the excessive memory requirements associated with the nonlocal fractional derivative operator. Our approach combines recent advances in the development and utilization of multivariate sparse spectral methods as well as fast methods for the computation of Gauss quadrature nodes with recursive non-classical methods for the Caputo fractional derivative of general fractional order $\alpha > 0$. An attractive feature of the method is that it has minimal theoretical overhead when using it on any domain $\Omega \subset \mathbb{R}^d$ on which an orthogonal polynomial basis is available.
We discuss the memory requirements of the method, present several numerical experiments demonstrating the method's performance in solving time-fractional PDEs on intervals, triangles and disks and derive error bounds which suggest sensible convergence strategies. As an important model problem for this approach we consider a type of wave equation with time-fractional dampening related to acoustic waves in viscoelastic media with applications in the physics of medical ultrasound and outline future research steps required to use such methods for the reverse problem of image reconstruction from sensor data.
\end{abstract}

\begin{keyword}
time-fractional \sep Caputo derivative\sep fractional derivative\sep fractional PDE\sep history-free\sep sparse\sep banded\sep spectral method\sep orthogonal polynomials
\end{keyword}

\end{frontmatter}



\section{Introduction}
The Caputo derivative \cite{caputo1967linear} of order $\alpha >0, \alpha \notin \mathbb{N}$
\begin{align}\label{eq:Caputodefinition}
\frac{\partial^\alpha}{\partial t^\alpha} f(t) = D_C^\alpha f(t)=\frac{1}{\Gamma(\left\lceil \alpha \right\rceil-\alpha)} \int_0^t (t-s)^{(\left\lceil \alpha \right\rceil-\alpha-1)}f^{( \left\lceil \alpha \right\rceil)}(s)\, \mathrm{d}s,
\end{align} 
is a fractional generalization of the conventional integer order derivative
and appears in many mathematical models in the natural science and medical applications \cite{treeby2010modeling,treeby2014modeling,magin2004fractional,goulart2017fractional,dikmen2005modeling,taylor2002kelvin,cesarone2004memory,bounaim2008computations}. There are multiple non-equivalent generalizations of the fractional derivative which have received the attention of mathematical research. In this paper, we focus on Caputo type (as opposed to e.g. Riemann-Liouville type) fractional derivatives exclusively.\\
In the general case of non-integer $\alpha$ the Caputo derivative is nonlocal and computing numerical solutions to fractional differential equations frequently more closely resembles integral rather than differential equation solvers. Treated naively, the nonlocality forces us to use an approach in which the history of the function at each point in time $t$ must be stored -- in principle indefinitely. Since fractional differential equations appear frequently in models used in the natural sciences (some of which we will mention below) where the desired spacetime dimension is usually $3+1$ there has been a great deal of interest in the literature in mitigating the memory accumulation problem inherent to time-fractional differential equations.\\
In this paper we introduce a nonclassical recursive and static memory (`history-free') method for time-fractional partial differential equations on any domain on which sparse spectral methods for classical PDEs are available, with minimal theoretical overhead when adapting it to new problems and domains.\\
A full discussion of numerical methods available for the treatment of fractional PDEs is impossible to give in the context of a single paper and entire monographs have been written on the topic, see e.g. \cite{jin2023numerical,harker2020fractional,baleanu2012fractional}. We nevertheless aim to provide a brief overview of popular approaches for time-fractional PDEs and in particular aim to mention instances where spectral methods were used in such a context: Classical finite difference time-stepping approaches for time-fractional differential equations end up with $O(N^2)$ computational complexity and $O(N)$ memory requirements for $N$ time steps, since as mentioned they effectively require remembering the solution at each previous time-step \cite{jin2023numerical}. Coupling higher dimensional spatial discretizations with such methods thus quickly becomes computationally untenable as the complexity is multiplied by the number of spatial grid points. The computational complexity may be improved by using a sum of exponentials (SOE) approach as described e.g. in \cite[Section 4.2]{jin2023numerical}, which when used in conjunction with appropriately chosen memory cutoffs $N_\ell$ can reduce the memory footprint to $O(N_\ell)$ and the computational complexity to $O(N N_\ell)$. Somewhat simpler approaches which also rely on what has been termed the "short memory principle" were discussed in great detail in \cite{podlubny1998fractional}. A classical reference for a critique as well as proposed improvements for the short memory principle in the form of a \emph{logarithmic} memory principle can be found in \cite{ford2001numerical}. We skip over a contemporary approach to the above references, the nonclassical method due to Yuan and Agrawal  \cite{yuan2002numerical}, as it is the topic of section \ref{sec:previouswork}.\\
Spectral methods for fractional calculus problems of various kinds have recently been successfully used for a number of different problems: A sparse spectral method using a non-orthogonal sum space basis for ODEs involving Caputo or Riemann--Liouville fractional derivatives was introduced by Hale and Olver in \cite{hale2018fast}. More recently, a spectral method for such ODEs was proposed by Pu and Fascondini in \cite{pu2022numerical} which improved upon the stability of Olver and Hale's method by relying on specially constructed orthogonal functions at the cost of no longer resulting in sparse systems. Despite their efficiency and accuracy, neither of these spectral methods allow memory free computations for time-fractional PDEs, which is a potentially critical limitation for real world applications in higher dimensions. A relevant model of such an equation which we consider in some detail in this paper is a lossy wave equation model for acoustic waves in viscoelastic media as e.g. discussed in \cite{treeby2014modeling,treeby2010modeling} for the purposes of developing medical ultrasound treatment devices. Colbrook and Ayton \cite{colbrook2022contour} recently introduced a competitive contour method for time-fractional PDEs based on taking inverse Laplace transforms. Among the advantages of Colbrook and Ayton's method are tight error control and excellent convergence properties. As a drawback one must be able to make a suitable choice of contour which requires knowledge about the singularities of the involved Laplace transform.\\
The primary motivation of this paper is the development of efficient and accurate solvers for the above-mentioned viscoelastic fractional wave equation \cite{treeby2014modeling} (see also \cite{treeby2010modeling}) with applications in medical ultrasound treatment devices:
\begin{align*}
\frac{1}{c_0^2} \frac{\partial^2}{\partial t^2}f - \Delta f - \tau \frac{\partial ^\alpha}{\partial t^\alpha} \Delta f = 0.
\end{align*}
The main domain of interest for this problem is a union of three-dimensional ball and stacked spherical shells of varying thickness as an initial model for the human skull and brain. State-of-the-art methods for this problem, cf. \cite{treeby2014modeling,chen2004fractional}, replace the time-fractional derivative with a spatially nonlocal fractional Laplacian $(-\Delta)^\alpha$ \cite{kwasnicki2017ten,di2012hitchhikers}. While this results in a history-free method, it also introduces an inherent systematic approximation error prior to any considerations of error in the numerical solver. While the full multi-domain model presents additional difficulties which we intend to address in a follow-up paper, we discuss a closely related stand-in on the two-dimensional disk in the numerical experiments in Section \ref{sec:treebynumexp} to demonstrate our method's applicability for this class of problems.\\
This paper is structured as follows: In Section \ref{sec:spectralintro} we give a brief introduction into the theory of and recent advances made in sparse spectral methods. Section \ref{sec:previouswork} provides a brief history of nonclassical methods for Caputo derivatives. Section \ref{sec:recursivesection} shows how to construct recursive history-free sparse spectral methods for time-fractional Caputo PDEs. In Section \ref{sec:memorycost} we consider the question of how much stored data the introduced static memory method requires. Finally, we give an error analysis of the method in Section \ref{sec:error} and present five detailed numerical experiments in Secton \ref{sec:numexpsec} which discuss in practice properties such as error, stability as well as applicability in higher dimensional domains. We conclude with an outlook on future research directions.
\section{Sparse spectral methods}\label{sec:spectralintro}
In this section we introduce and discuss the building blocks of a class of numerical algorithms for solving differential and integral equations called sparse spectral methods. The specific type of spectral method we describe in this paper belongs to the class of \emph{ultraspherical} spectral methods as described in a 2013 paper by Olver and Townsend \cite{olver2013fast}, where the `ultraspherical' is a reference to the ultraspherical polynomials (aka \emph{Gegenbauer} polynomials) used therein. A more recent thorough treatment of these methods can be found in \cite{olver2020fast}. The basic idea behind these methods is to expand functions in bases of orthogonal polynomials in such a way that the linear operators appearing in the problem, e.g. differentiation and integration, have banded matrix forms.\\
We call a complete set of degree-ordered polynomials $\{ p_j(x) \}_{j \in \mathbb{N}_0}$ $orthogonal$ with respect to a positive Borel measure $\mu(x)$ if the polynomials satisfy
\begin{align*}
\langle p_n, p_m \rangle_\mu = \int_\mathbb{R} p_n(x) p_m(x) \mathrm{d}\mu(x) = c_{nm} \delta_{nm},
\end{align*}
where $\delta_{nm}$ is the Kronecker delta and $c_{nm}$ are constants independent of $x$. If  $c_{nm} = 1$ for all $n,m$ then the polynomials are further referred to as \emph{orthonormal}. Throughout this paper we adopt the notation $$\mathbf{P}(x) = \left( p_0(x) \quad p_1(x)\quad p_2(x)\quad \cdots \right).$$ The advantages of this notation is that polynomial expansion of a sufficiently well-behaved function in an orthogonal polynomial basis can be written concisely as
\begin{align*}
f(x) = \mathbf{P}(x) \bm{f} = \sum_{k=0}^\infty f_k p_k(x).
\end{align*}
In a computing context these function approximations are truncated at finite degree $K$. In addition to the above characterization, it is natural to speak of orthogonal polynomials as living on a characteristic domain $\Omega = \rm{supp}{(\mu)}$ determined by the support of the measure. For our purposes the measure $\mu(x)$ will always have an associated density function such that we have $\mathrm{d}\mu(x) = w(x) \mathrm{d}x$ where $\mathrm{d}x$ is the Lebesgue measure and we call $w(x)$ the weight function or simply the weight associated with the orthogonal polynomials. Classical examples of orthogonal polynomials are the well-known Legendre, Chebyshev, Gegenbauer, Jacobi, Laguerre and Hermite polynomals. Of particular importance for spectral methods are the Jacobi polynomials $\mathbf{P}^{(a,b)}(x)$, a two-parameter family of polynomials (with Legendre, Chebyshev, Gegenbauer as special cases) which are orthogonal with respect to the weight function $w(x) = (1-x)^a(1+x)^b$ with $a,b>-1$ on $\Omega = [-1,1]$.\\
Finally, we comment on how to solve integro-differential equations using sparse spectral methods. First, we note that differentiation
\begin{align*}
\frac{\mathrm{d}}{\mathrm{d}x} f(x) = \mathbf{P}(x) \mathcal{D} \bm{f},
\end{align*}
as well as integration are known to satisfy recurrences and are thus banded operators when mapping between appropriate polynomial spaces, see e.g. the explicit forms given in \cite[18.9]{nist_2022}. As an explicit example we consider the entries of $\mathcal{D}$ for Jacobi polynomials and observe that the banded first derivative acts a shift operator between bases as follows \cite[18.9.15]{nist_2022}:
\begin{align*}
\frac{\mathrm{d}}{\mathrm{d}x}P^{(a,b)}_{n}\left(x\right)=\frac{(n+a+b+1)}{2}P^{(a+1,b+1)}_{n-1}\left(x\right).
\end{align*}
Note from the above that obtaining banded derivative operators requires careful choice of target basis (it will generally not be the same as the original basis) and may thus necessitate the use of banded conversion operators \cite[18.9]{nist_2022} elsewhere in the equation where no derivatives operators are present. We will see examples of how this is done in practice in the numerical experiments of this paper. To solve differential equations one usually also requires a way to enforce boundary conditions. In the context of sparse spectral methods this is usually achieved by appending point-evaluation functionals, which take the form of row vectors acting on coefficients, to the linear operator and the corresponding boundary values to the right-hand side, cf. e.g. \cite{olver2013fast,gutleb2021fast}. Alternatively one can also build the boundary conditions directly into the basis, cf. \cite{olver_sparse_2019}. \\
In Section \ref{sec:numexpsec} we describe numerical experiments of our method on triangle and disk domains. We thus note that the discussions in this section can be straightforwardly generalized to higher dimensional domains by using multivariate orthogonal polynomials $\mathbf{P}(\mathbf{x})$, cf. \cite{olver_sparse_2019}. In general higher dimensional domains each variable $x$, $y$ etc. has its own associated block-banded multiplication operator $X$, $Y$ and so on and PDEs with non-zero boundary conditions may be solved using restriction operators analogously to the use of point evaluation operators mentioned above.\\
We close this section with an overview of recent progress on these banded spectral methods on various domains of interest: Since the seminal report on ultraspherical spectral methods in \cite{olver2013fast} a flurry of papers have been published to expand the range of problems that they can be applied to by deriving recurrence relationships which result in banded operators for various types of problems, see e.g. \cite{slevinsky2017fast,gutleb2022sparse,gutleb2020sparse,gutleb2022balls,papadopoulos2022sparse,hale2019ultraspherical,gutleb2022computing}. A directly related line of recent research has concerned the construction of efficiently computable orthogonal polynomial bases on various multivariate domains for the purpose of using them in such sparse spectral methods, recent successful examples of which include disks \cite{vasil_tensor_2016}, triangles and simplices \cite{olver_sparse_2019,olver_recurrence_2018,aktacs2020new}, disk slices and trapeziums \cite{snowball_sparse_2020}, spherical caps \cite{snowball2021sparse}, wedges \cite{olver_orthogonal_2019}, surfaces of revolution \cite{olver_revolution_2020} as well as quadratic and cubic curves \cite{fasondini_orthogonal_2020,olver_orthogonal_2021}. For a recent reference on multivariate orthogonal polynomials on classical higher dimensional domains such as balls, simplices and cubes we refer to the monograph on the subject by Dunkl and Xu \cite{dunkl_orthogonal_2014}. We refer to \cite{gutleb_polynomial_2023} for a detailed discussion of computational aspects relating to constructing orthogonal polynomials with respect to non-classical weight functions, which can be used to construct orthogonal polynomials on non-classical domains such as annuli and spherical bands.\\
Finally, we mention as part of our motivation for using sparse spectral methods the possibility of adapting such methods in the form of spectral element methods which instead of being posed on a single domain work by deconstructing a domain into a mesh of simpler form -- usually by means of triangles as done in \cite{olver_sparse_2019}. For the motivating equation of interest, the natural domain decomposition is to split the sphere model of the skull into ball and spherical shell domains along the discontinuities. Importantly, the option of using spectral element methods with minimal theoretical modifications means that one can alleviate some of the usual criticisms levied against sparse spectral methods in requiring significant amounts of regularity from both the domain and the solution since a function then only needs to be well approximated by piecewise polynomials on a mesh, allowing the use of discontinuities as well as mesh boundary conditions.
\section{Previous work on non-classical methods for Caputo derivatives}\label{sec:previouswork}
The method we describe in this paper belongs to a family of non-classical numerical algorithms for time-fractional differential equations whose origin can be traced back to a 2002 paper by Yuan and Agrawal \cite{yuan2002numerical}. In this section we provide an overview of the development of these methods without claiming historical completeness. Put concisely, Yuan and Agrawal made use of the following result:
\begin{theorem}[Generalized Yuan--Agrawal--Caputo derivative]\label{thm:generalyuanagrawal}
Let $\alpha > 0$, $\alpha \notin \mathbb{N}$ and $f \in C^{\lceil \alpha \rceil}[0,T]$. Then the Caputo fractional derivative of $f$  can be expressed as
\begin{align*}
\frac{\partial^\alpha}{\partial t ^\alpha} f(t) = \int_0^\infty \phi_f(w,t) \mathrm{d}w,
\end{align*}
where the function $\phi_f : (0,\infty) \times [0,T] \rightarrow \mathbb{R}$ is defined by
\begin{align*}
\phi_f(w,t) := \frac{(-1)^{\lfloor \alpha \rfloor}2 \sin(\pi \alpha)}{\pi} w^{2\alpha-2\lceil \alpha \rceil +1} \int_0^t e^{-w^2(t-\tau)} \frac{\partial^{\lceil \alpha \rceil}}{\partial \tau^{\lceil \alpha \rceil}} f(\tau)\mathrm{d}\tau.
\end{align*}
Furthermore, for fixed $w>0$ the function $\phi_f(w,t)$ satisfies the (non-fractional) differential equation
\begin{align}\label{eq:diffeqphiYA}
\frac{\partial}{\partial t} \phi_f(w,t) = -w^2 \phi_f(w,t) + \frac{(-1)^{\lfloor \alpha \rfloor}2 \sin(\pi \alpha)}{\pi} w^{2\alpha-2\lceil \alpha \rceil +1} \frac{\partial^{\lceil \alpha \rceil}}{\partial t^{\lceil \alpha \rceil}} f(t),
\end{align}
with initial condition $\phi(w,0) = 0$.
\end{theorem}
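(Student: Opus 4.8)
The plan is to establish the two claims in turn: first the integral representation, then the evolution equation \eqref{eq:diffeqphiYA} together with its initial condition. Both reduce to direct computation, and the only genuine subtlety is justifying an interchange in the order of integration.

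For the representation, I would substitute the definition of $\phi_f$ into $\int_0^\infty \phi_f(w,t)\,\mathrm{d}w$ and apply Fubini's theorem to exchange the $w$- and $\tau$-integrals. This isolates the inner integral $\int_0^\infty w^{2\alpha - 2\lceil\alpha\rceil + 1} e^{-w^2(t-\tau)}\,\mathrm{d}w$, which the substitution $v = w^2$ converts into a standard Gamma integral with value $\tfrac{1}{2}\Gamma(\alpha - \lceil\alpha\rceil + 1)(t-\tau)^{\lceil\alpha\rceil - \alpha - 1}$; note that since $\alpha \notin \mathbb{N}$ the argument $\alpha - \lceil\alpha\rceil + 1$ lies in $(0,1)$, so this integral converges. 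What then remains is a purely algebraic identity. I would use the Euler reflection formula $\Gamma(z)\Gamma(1-z) = \pi/\sin(\pi z)$ at $z = \lceil\alpha\rceil - \alpha$, together with $\sin(\pi(\lceil\alpha\rceil - \alpha)) = (-1)^{\lceil\alpha\rceil+1}\sin(\pi\alpha)$ and the fact that $\lceil\alpha\rceil = \lfloor\alpha\rfloor + 1$ for non-integer $\alpha$, to show that the combined prefactor $\tfrac{(-1)^{\lfloor\alpha\rfloor}2\sin(\pi\alpha)}{\pi}\cdot \tfrac{1}{2}\Gamma(\alpha - \lceil\alpha\rceil + 1)$ collapses exactly to $1/\Gamma(\lceil\alpha\rceil - \alpha)$. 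Comparison with the Caputo definition \eqref{eq:Caputodefinition} then closes the first part.

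The step requiring the most care, and what I regard as the main obstacle, is the justification of Fubini's theorem. Since $f \in C^{\lceil\alpha\rceil}[0,T]$, the integrand $f^{(\lceil\alpha\rceil)}$ is bounded on $[0,T]$ by some constant $M$, so I would dominate the double integral of the absolute value (via Tonelli on the non-negative majorant) by $M\int_0^t \tfrac{1}{2}\Gamma(\alpha - \lceil\alpha\rceil + 1)(t-\tau)^{\lceil\alpha\rceil - \alpha - 1}\,\mathrm{d}\tau$. The exponent $\lceil\alpha\rceil - \alpha - 1$ lies in $(-1,0)$, so the singularity at $\tau = t$ is integrable and the majorant is finite, which legitimizes the interchange and also confirms that $\phi_f(w,t)$ is well defined.

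For the evolution equation, I would write $\phi_f(w,t) = C(w)\,e^{-w^2 t}\int_0^t e^{w^2\tau} f^{(\lceil\alpha\rceil)}(\tau)\,\mathrm{d}\tau$ with $C(w) = \tfrac{(-1)^{\lfloor\alpha\rfloor}2\sin(\pi\alpha)}{\pi}w^{2\alpha-2\lceil\alpha\rceil+1}$, and differentiate in $t$ using the product rule and the fundamental theorem of calculus. Differentiating the factor $e^{-w^2 t}$ reproduces the term $-w^2\phi_f(w,t)$, while the boundary contribution from the variable upper limit of the integral yields $C(w)\,f^{(\lceil\alpha\rceil)}(t)$, giving precisely \eqref{eq:diffeqphiYA}. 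Finally, evaluating at $t=0$ makes the integral vanish, so $\phi_f(w,0) = 0$. I expect no obstacle in this last part beyond routine differentiation.
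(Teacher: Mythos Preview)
Your proposal is correct and complete. The paper itself does not supply a proof of this theorem; it attributes the $0<\alpha<1$ case to Yuan and Agrawal, the $1<\alpha<2$ extension to Trinks and Ruge, and the fully general statement to Diethelm (2008), and simply cites these works. Your argument---Fubini to swap the $w$- and $\tau$-integrals, the substitution $v=w^2$ to reduce the inner integral to $\tfrac{1}{2}\Gamma(\alpha-\lceil\alpha\rceil+1)(t-\tau)^{\lceil\alpha\rceil-\alpha-1}$, and the reflection formula to collapse the constant to $1/\Gamma(\lceil\alpha\rceil-\alpha)$---is exactly the standard derivation underlying those references, and your justification of Fubini via the boundedness of $f^{(\lceil\alpha\rceil)}$ on $[0,T]$ and integrability of $(t-\tau)^{\lceil\alpha\rceil-\alpha-1}$ is the right way to make it rigorous. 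The derivation of the ODE for $\phi_f$ and its initial condition is likewise routine and correct.
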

Strictly speaking Yuan and Agrawal themselves only proved this result for $0 < \alpha < 1$, with an extension for $1 < \alpha < 2$ being provided by Trinks and Ruge \cite{trinks2002treatment} in the same year. The fully general version we reproduced above was proved by Diethelm in 2008 \cite{diethelm2008investigation}. The Yuan--Agrawal method for solving equations involving the Caputo fractional derivative computes $\phi_f(w,t)$ via the differential equation \eqref{eq:diffeqphiYA} and then uses Gauss--Laguerre quadrature for the half-infinite integral. As all of the resulting terms then only depend on the functions $f$ and $\phi_f$ at the previous time step this eliminates the need to store the past of $f$ in memory in exchange for the additional computational effort of computing $\phi_f$ and its half-infinite integral. A similar way of rewriting the Caputo fractional derivative using a slightly different integrand function $\phi_*(w,t)$ was introduced by Chatterjee \cite{chatterjee2005statistical} and also later analyzed by Diethelm \cite{diethelm2008investigation} but we will not further discuss this alternative in this paper.\\
The Yuan--Agrawal method initially received mixed responses in the literature due to its poor convergence properties \cite{schmidt2006critique, lu2005wave}, specifically in relation to the number of quadrature points required for the Gauss--Laguerre step, but also resulted in a number of papers being written with the specific intention of improving it. Lu and Hanyga split the half-infinite integral into a region of $[0,1]$ and $[1,\infty)$ and used Gauss--Jacobi and Gauss--Laguerre respectively to improve the convergence behavior of the method \cite{hanyga2005wave,lu2004numerical,lu2005wave}. A full explanation of the poor convergence property and error analysis was later given by Diethelm \cite{diethelm2008investigation}. We also note a more recent exploration of positive and negative examples of these non-classical approaches in \cite{liu2019theoretical}. In \cite{diethelm2008investigation}, Diethelm argued for a modification of the Yuan--Agrawal method which substantially improved its convergence properties by using a transformation of the function $\phi_f : (0,\infty) \times [0,T]\rightarrow \mathbb{R}$ to a function $\bar\phi_f : (-1,1) \times [0,T]\rightarrow \mathbb{R}$ inspired by Gautschi's classical work on quadrature methods for half-infinite integrals \cite{gautschi1991quadrature}:
\begin{align}\label{eq:Diethelmphi}
\bar \phi_f(\kappa,t) := 2 (1-\kappa)^{-\bar\alpha} (1+\kappa)^{\bar\alpha-2} \phi_f\left(\frac{1-\kappa}{1+\kappa} , t \right).
\end{align}
We note the connection of this idea to Cayley transforms which map the right half plane to the unit disk, cf. \cite{bultheel2005generalizations}. In one dimension this changes the integral on the half-infinite Laguerre domain to an integral on the Jacobi domain:
\begin{align*}
\int_0^\infty \phi_f(w,t) \mathrm{d}w = \int_{-1}^{1} (1-\kappa)^{\bar\alpha} (1+\kappa)^{-\bar\alpha} \bar\phi_f(\kappa,t) \mathrm{d}\kappa,
\end{align*}
with $\bar{\alpha} := 2\alpha-2\lceil \alpha\rceil +1 \in (-1,1)$. Diethelm's method thus uses Gauss--Jacobi quadrature instead of Gauss--Laguerre quadrature which as analyzed in \cite{diethelm2008investigation} substantially improves the method's convergence in terms of the number of quadrature points needed.\\
In 2010 Birk and Song \cite{birk2010improved} published a paper which provides an alternative point of view on the Yuan--Agrawal and Diethelm methods. Rather than working in the time domain, Birk and Song derive a frequency domain non-classical method for Caputo derivatives. It is widely known, see e.g. \cite{kilbas1993fractional,tseng2000computation,birk2010improved}, that the Caputo derivative has the Fourier multiplier form
\begin{align}\label{eq:fouriermultiplierform}
\mathcal{F}\left\{\frac{\partial^\alpha}{\partial t ^\alpha} f(t)\right\}(\omega) = (i \omega)^\alpha \mathcal{F}\left\{f(t)\right\}(\omega),
\end{align}
if $\forall t < 0: f(t) = 0$, with the Fourier transform defined by
\begin{align*}
    \mathcal{F}\{ f(t) \}(\omega) = \frac{1}{\sqrt{2 \pi}} \int_\mathbb{R} e^{-i\omega t} f(t) \mathrm{d}t.
\end{align*}
Birk and Song \cite{birk2010improved} show that we can express $(i \omega)^\alpha$ by the following integral:
\begin{align}\label{eq:preapproxbirksong}
(i \omega)^\alpha = \frac{(-1)^{\lfloor \alpha \rfloor}2 \sin(\pi \alpha )}{\pi} \int_0^\infty \frac{(i\omega)^{\lceil \alpha \rceil}}{i\omega + p^2} p^{\tilde\alpha} \mathrm{d}p,
\end{align}
where $\tilde\alpha := 2\alpha - 2\lceil \alpha \rceil + 1$. Note that $\tilde\alpha \in (-1,1)$ for any non-integer order of fractional differentiation $\alpha >0$, $\alpha \notin \mathbb{N}$. This result has a natural interpretation as the frequency domain equivalent of the time domain Yuan--Agrawal form, cf. \cite[Appendix A]{birk2010improved}. Birk and Song \cite{birk2010improved} note that the approaches of Yuan--Agrawal and Diethelm can both be understood as resulting from a quadrature rule being applied to Equation \eqref{eq:preapproxbirksong}:
\begin{align}\label{eq:generalquadraturefrequencydomain}
(i\omega)^\alpha \approx (i \omega)^{\lceil \alpha \rceil} \sum_{j=1}^L \frac{A_j}{i\omega+s_j^2},
\end{align}
with $A_j$, $s_j$ taking different values depending on whether an $L$-point Gauss--Laguerre or Gauss--Jacobi quadrature is used, see Table 1. The third row in Table 1 corresponds to an alternative Gauss--Jacobi quadrature option discussed by Birk and Song in \cite{birk2010improved} which they show has advantages in some frequency domains and disadvantages in others.
\begin{table}[H]
\begin{center}
\begin{tabular}{c c c c}
\textbf{method} &    \textbf{quadrature type}      &      $A_j$       & $s_j$  \\ \hline \hline
Yuan--Agrawal \cite{yuan2002numerical}   & Gauss--Laguerre         & $\frac{(-1)^{\lfloor \alpha \rfloor} 2 \sin(\pi \alpha)}{\pi} e^{p_j} \lambda_j p_j^{\tilde\alpha}$ &$p_j$                     \\   
Diethelm \cite{diethelm2008investigation}   & $(1-x)^{\tilde\alpha}(1+x)^{-\tilde\alpha}$        &         $\frac{(-1)^{\lfloor \alpha \rfloor} \sin(\pi \alpha)}{\pi} \frac{4 \lambda_j}{(1+p_j)^2}$  & $\frac{1-p_j}{1+p_j}$    \\  
Birk--Song \cite{birk2010improved}    & $(1-x)^{2\tilde\alpha+1}(1+x)^{1-2\tilde\alpha}$       & $\frac{(-1)^{\lfloor \alpha \rfloor} \sin(\pi \alpha)}{\pi} \frac{8 \lambda_j}{(1+p_j)^4}$         & $\frac{(1-p_j)^2}{(1+p_j)^2}$   \\    
\end{tabular}
\caption{Parameter values for the $L$-point Gauss--Laguerre and respectively Gauss--Jacobi methods corresponding to Yuan--Agrawal's, Diethelm's and Birk--Song's methods in Equation \eqref{eq:generalquadraturefrequencydomain}. Note that $\lambda_j$ and $p_j$ in each row denote the appropriate weights and abscissae of the quadrature methods, cf. \cite[Section 4.2]{birk2010improved}.}
\end{center}
\label{tab:quadratureformulae}
\end{table}
\section{Towards a recursive non-classical sparse spectral method}\label{sec:recursivesection}
While the approach discussed in the previous section eliminates the requirement to memorize all previous states of the function $f(t)$ to which we apply the Caputo derivative, this is achieved by introducing additional internal variables. A recursive way to alleviate this drawback was suggested by Birk and Song \cite{birk2010improved}. They use Equation \eqref{eq:fouriermultiplierform} and the inverse Fourier transform of the rational approximation in \eqref{eq:generalquadraturefrequencydomain} to obtain
\begin{align}\label{eq:finalbirksong}
\frac{\partial^\alpha}{\partial t ^\alpha} f(t) &\approx \sum_{j=1}^L A_j \int_0^t e^{-s_j^2 (t-\tau)} \frac{\partial^{\lceil \alpha \rceil}}{\partial \tau^{\lceil \alpha \rceil}} f(\tau) \mathrm{d}\tau = \sum_{j=1}^L A_j \psi_j(t),
\end{align}
with
\begin{align}\label{eq:auxiliarydef}
\psi_j(t) := \int_0^t &e^{-s_j^2 (t-\tau)} \frac{\partial^{\lceil \alpha \rceil}}{\partial \tau^{\lceil \alpha \rceil}} f(\tau) \mathrm{d}\tau.
\end{align}
We note that one can arrive at an equivalent expression starting from Yuan--Agrawal's approach without needing to pass through frequency space. The integrals under the sum, i.e. $\psi_j(t)$, are readily found to satisfy the following recurrence relationship when advancing by a time step $\Delta t$:
\begin{align}
\psi_j(t) = e^{-s_j^2 \Delta t} \psi_j(t-\Delta t) + \int_{t-\Delta t}^t e^{-s_j^2 (t-\tau)} \frac{\partial^{\lceil \alpha \rceil}}{\partial \tau^{\lceil \alpha \rceil}} f(\tau) \mathrm{d}\tau.\label{eq:abstractrecurrencebirksong}
\end{align}
Note that we have $\psi_j(0) = 0$ and the right-hand side of \eqref{eq:abstractrecurrencebirksong} only depends on the integral which was already computed in the previous time step as well as an incrementing term without long term memory of either $f(t)$ or $\psi_j(t)$.\\
Birk and Song show in \cite{birk2010improved} how the results discussed until this point can be used to solve ODEs involving the Caputo derivative: For simplicity we use $\alpha \in (0,1)$ for the remainder of the paper such that $$\frac{\partial^{\lceil \alpha \rceil}}{\partial \tau^{\lceil \alpha \rceil}} = \frac{\partial}{\partial \tau}, \quad \alpha \in (0,1),$$ as the general $\alpha$ method becomes somewhat obfuscated by notation. Generalizations for higher orders of $\alpha$ are straightforward to obtain. \\
Approximating $f(t)$ via linear interpolation during each time step,  i.e.:
\begin{align}\label{eq:pseudotrapezoid}
f(\tau) \approx f((n-1) \Delta t) \left( 1 - \frac{\tau}{\Delta t} \right) + f(n \Delta t) \frac{\tau}{\Delta t}, \quad 0 \leq \tau \leq \Delta t,
\end{align}
one readily obtains the following approximation from \eqref{eq:abstractrecurrencebirksong}:
\begin{align}
\psi_j(n \Delta t) \approx \psi^n_j = e^{-s_j^2 \Delta t} \psi_j((n-1) \Delta t) + \tfrac{(1-e^{-s_j^2 \Delta t})}{s_j^2}\tfrac{f(n\Delta t)-f((n-1)\Delta t)}{\Delta t}.\label{eq:trapezoidal}
\end{align}
By the above discussion we thus have
\begin{align}\label{eq:caputoODE}
\left(\frac{\partial^\alpha f}{\partial t ^\alpha}\right) (n \Delta t) &\approx \sum_{j=1}^L A_j \left(e^{-s_j^2 \Delta t} \psi_j((n-1) \Delta t) + \tfrac{(1-e^{-s_j^2 \Delta t})}{s_j^2}\tfrac{f(n\Delta t)-f((n-1)\Delta t)}{\Delta t}\right).
\end{align}
\begin{remark}
Birk and Song \cite{birk2010improved} refer to the linear interpolation in \eqref{eq:pseudotrapezoid} as `trapezoidal rule' but while linear interpolation is behind the trapezoidal rule applied to $$\int_{t-\Delta t}^t f(\tau) \mathrm{d}\tau,$$ this is not equivalent to applying the trapezoidal rule to integrate $$\int_{t-\Delta t}^t e^{-s_j^2 (t-\tau)} \frac{\partial}{\partial \tau} f(\tau) \mathrm{d}\tau.$$\end{remark}
Naturally the linear interpolation can be replaced with a higher order interpolation to obtain a higher order method at the cost of storing additional function values. In fact any numerical method capable of numerically integrating the right-most term of \eqref{eq:abstractrecurrencebirksong} can be substituted in this approach. In this paper we restrict ourselves to using linear interpolation for notational simplicity but recommend using higher order methods in step size sensitive applications.\\
Since our aim is to derive a memory free solver for PDEs featuring time-fractional Caputo derivatives the minimal problem of interest involves $\frac{\partial^\alpha}{\partial t ^\alpha} f(t,\mathbf{x})$, where $f$ now depends on both space $\mathbf{x} \in \Omega \subset \mathbb{R}^d$ and time $t \in [0,T]$. To obtain a method in which each time step involves the solution of a \emph{sparse} linear system, while simultaneously providing excellent spatial convergence properties we choose to combine Birk--Song's recursive method with sparse spectral methods as described in Section \ref{sec:spectralintro}.\\
At any fixed point in time $n \Delta t$ we represent the function $f(t,\mathbf{x})$ and the $L$ auxiliary node functions $\psi_j(t,\mathbf{x})$ via their coefficient vectors in an orthogonal polynomial basis $\mathbf{P}(\mathbf{x})$ on the desired domain, i.e.:
$$f(n \Delta t,\mathbf{x}) = \mathbf{P}(\mathbf{x}) \bm{f}(n\Delta t),$$
$$\psi_j(n \Delta t,\mathbf{x}) = \mathbf{P}(\mathbf{x}) \bm{\psi}_j(n\Delta t).$$
We store only the current states of these functions such that the coefficient vectors $\bm{f}^n$ and $\bm{\psi}^n_j$ approximating $\bm{f}(n\Delta t)$ and $\bm{\psi}_j(n\Delta t)$ are overwritten in each step. By Equation \eqref{eq:caputoODE} we can thus approximate the Caputo derivative (with $\alpha \in (0,1)$) at a point in time $n \Delta t$ as
\begin{align}\label{eq:pdeversionofapprox}
\left(\frac{\partial^\alpha f}{\partial t ^\alpha}\right) (n \Delta t, \mathbf{x}) &\approx \mathbf{P}(\mathbf{x}) \sum_{j=1}^L A_j \left(e^{-s_j^2 \Delta t} \bm{\psi}^{n-1}_j + \tfrac{(1-e^{-s_j^2 \Delta t})}{s_j^2 \Delta t}\left(\bm{f}^n-\bm{f}^{n-1}\right)\right),
\end{align}
with the following recurrence for the auxiliary function coefficients:
\begin{align}\label{eq:caputoPDEpsi}
\psi(n \Delta t,\mathbf{x}) = \mathbf{P}(\mathbf{x})\bm{\psi}(n\Delta t) \approx \mathbf{P}(\mathbf{x}) \bm{\psi}_j^n = \mathbf{P}(\mathbf{x})\left(e^{-s_j^2 \Delta t} \bm{\psi}_j^{n-1} + \tfrac{(1-e^{-s_j^2 \Delta t})}{s_j^2}\tfrac{\bm{f}^n-\bm{f}^{n-1}}{\Delta t}\right).
\end{align}
To illustrate the method in action we finish this section by working through a toy problem for time-fractional PDEs with constructed analytic solutions. The following time-fractional equation of motion is an ODE arising in the massless one-dimensional Kelvin-Voigt model:
\begin{align}\label{eq:odenumericsexample}
kf + c \frac{\partial^\alpha}{\partial t^\alpha} f= g,\\
f(0) = 0,
\end{align}
where $c > 0$, $k > 0$ and the external force $g(t)$ is given by
\begin{align*}
g(t) = \begin{cases} 0, \quad \text{if } t=0,\\ 1, \quad \text{if } t > 0. \end{cases}
\end{align*}
This equation has the following known analytic solution, cf. \cite{schmidt2006critique,diethelm2008investigation,birk2010improved}
\begin{align*}
f(t) = \frac{1}{k} \left(1 - E_{\alpha,1}\left( - \frac{k t^\alpha}{c} \right) \right) .
\end{align*}
Equation \eqref{eq:odenumericsexample} was part of a critique of non-classical methods due to Schmidt and Gaul \cite{schmidt2006critique} and was also addressed again by Diethelm in \cite{diethelm2008investigation} as well as by Birk and Song in \cite{birk2010improved}. With its known explicit solution and relevant historical role in early papers on nonclassical methods this equation has become a popular toy model for numerically evaluating the accuracy of fractional Caputo differential equation solvers. We will use it to derive fractional PDEs with known solutions to use as toy models to test the PDE approach with before moving on to more involved problems in the numerical experiments in Section \ref{sec:numexpsec}. Multiplying Equation \eqref{eq:odenumericsexample} by $e^{-x}$, we readily obtain
\begin{align}
k\frac{\partial}{\partial x} f + c \frac{\partial^\alpha}{\partial t^\alpha} f &= g,\label{eq:pdenumericsexamples1}
\end{align}
satisfying $f(0,x) = 0$ and where we have defined 
\begin{align*}
g(t,x) := g(t)e^{x} = \begin{cases} 0, \hspace{5mm} \text{if } t=0,\\ e^{x}, \quad \text{if } t > 0. \end{cases}
\end{align*}
An explicit solution on $\mathbb{R}$ can be inferred via the equation's relationship with \eqref{eq:odenumericsexample}:
\begin{align*}
f(t,x) &= f(t)e^{-x} = \frac{e^{x}}{k} \left(1 - E_{\alpha,1}\left( - \frac{k t^\alpha}{c} \right) \right).
\end{align*}
Using Equation \eqref{eq:pdeversionofapprox} one can then derive the following history-free nonclassical scheme for Equation \eqref{eq:pdenumericsexamples1}:
\begin{align*}
\mathbf{P}(x) &{} \left( k \mathcal{D} +  \left(c \sum_{j=1}^L A_j \frac{(1-e^{-s_j^2 \Delta t})}{s_j^2 \Delta t} \right)\mathcal{I}  \right) \bm{f}^n \\&=  \mathbf{P}(x) \left( \bm{g}^n + \left(c \sum_{j=1}^L A_j \frac{(1-e^{-s_j^2 \Delta t})}{s_j^2 \Delta t} \right)\bm{f}^{n-1} - \left( c \sum_{j=1}^L A_j e^{-s_j^2\Delta t} \bm{\psi}_j^{n-1}\right) \right),
\end{align*}
where the $\bm{\psi}_j^{n-1}$ are computed recursively via \eqref{eq:caputoPDEpsi}. More precisely, to obtain a \emph{banded} linear system we use the following bases and operators:
\begin{align*}
&\mathbf{P}^{(1,1)}(x) \left( k \mathcal{D} +  \left(c \sum_{j=1}^L A_j \frac{(1-e^{-s_j^2 \Delta t})}{s_j^2 \Delta t} \right)\mathcal{C}  \right) \bm{f}^n \\&=  \mathbf{P}^{(1,1)}(x) \mathcal{C}  \left( \bm{g}^n + \left(c \sum_{j=1}^L A_j \frac{(1-e^{-s_j^2 \Delta t})}{s_j^2 \Delta t} \right)\bm{f}^{n-1} - \left( c \sum_{j=1}^L A_j e^{-s_j^2\Delta t} \bm{\psi}_j^{n-1}\right) \right),
\end{align*}
where $\mathcal{C}$ is the banded conversion operator (a change of basis) from the Legendre polynomials $\mathbf{P}^{(0,0)}(x)$ to the Jacobi polynomials $\mathbf{P}^{(1,1)}(x)$ and $\bm{f}$ and $\bm{\psi}$ are expanded in $\mathbf{P}^{(0,0)}(x)$. This change of basis operator is required since for $\mathcal{D}$ to be banded in the ultraspherical spectral method it has to map from the basis $\mathbf{P}^{(0,0)}(x)$ to $\mathbf{P}^{(1,1)}(x)$ as discussed in Section \ref{sec:spectralintro}, cf. \cite{olver2020fast,olver2013fast} and \cite[18.9]{nist_2022}. As usual for this type of spectral method, the boundary conditions at each time step can be enforced either by using suitable weighted bases or by appending rows corresponding to the evaluation operator to the top of the LHS as well as the boundary condition values to the top of the RHS, see e.g. the methods discussed in \cite{townsend2015automatic,hale2018fast,gutleb2021fast} and the references therein. We opt to use the latter option along with the analytic solution values as both the $t=0$ initial and spatial boundary conditions in this numerical experiment.\\
In Figure \ref{fig:toycontour} we show absolute and relative error contour plots for the problem in \eqref{eq:pdenumericsexamples1} with $k=10$ and $c=100$ for a degree $K=40$ approximation computed with $L=50$ quadrature points and $\Delta t = 2^{-20}$ for $t \in [0,1]$, $x \in [-1,1]$. Figure \ref{fig:toyslice} shows the error for $x \in [-1,1]$ at time $t=1$. Stepping through $1000$ time steps with $L = 50$ and $K = 40$ takes roughly 29 milliseconds on a Lenovo ThinkPad X1 Carbon laptop with a 12th Gen Intel i5-1250P CPU.
\begin{remark}
We note that while in the worked example this section we let $c$ and $k$ be constants independent of $t$ and $x$ due to the availability of analytic solutions, the nonclassical method we propose in this paper readily accommodates spacetime dependent coefficients by e.g. replacing the scalar coefficient $k(t,x)$ by a multiplication operator $\mathrm{K}^n(\mathrm{X})$ at time $n \Delta t$. If $k(t,x)$ is polynomial in $x$, this operator does not break the sparsity of the method since polynomial multiplication operators are banded, cf. \cite{olver2020fast}. Well-behaved non-polynomial coefficient functions, while in theory resulting in dense multiplication operators, may in certain circumstances nevertheless decay rapidly off-band and can thus sometimes still be approximated by banded matrices. If $k(t,x)=k(x)$ is independent of $t$ the banded multiplication operator may be stored in memory for significant performance improvements.
\end{remark}
\begin{figure}\centering
     \subfloat[absolute error]
    {{ \centering \includegraphics[width=9.5cm]{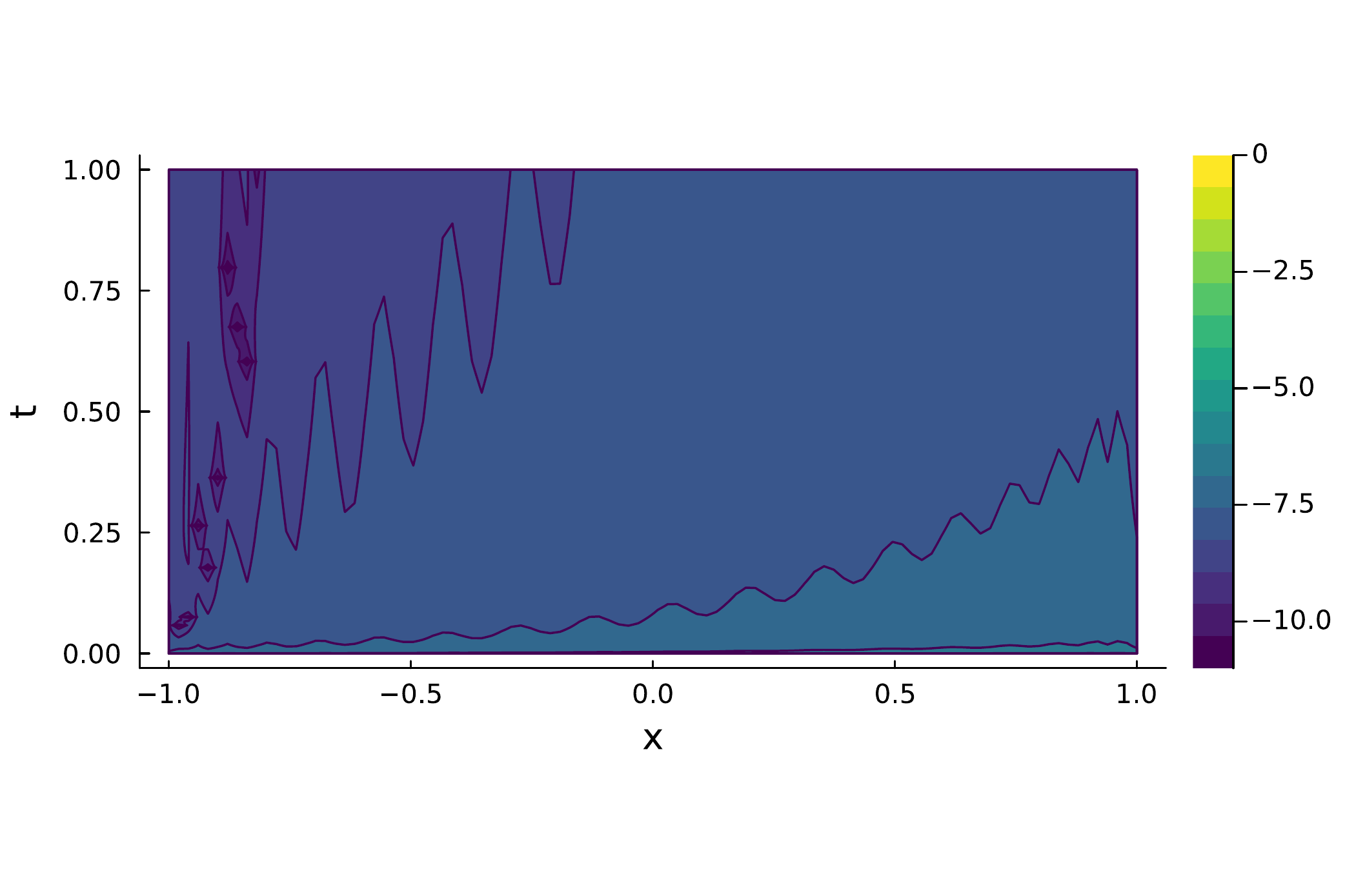} }}\\
     \subfloat[relative error]
    {{ \centering \includegraphics[width=9.5cm]{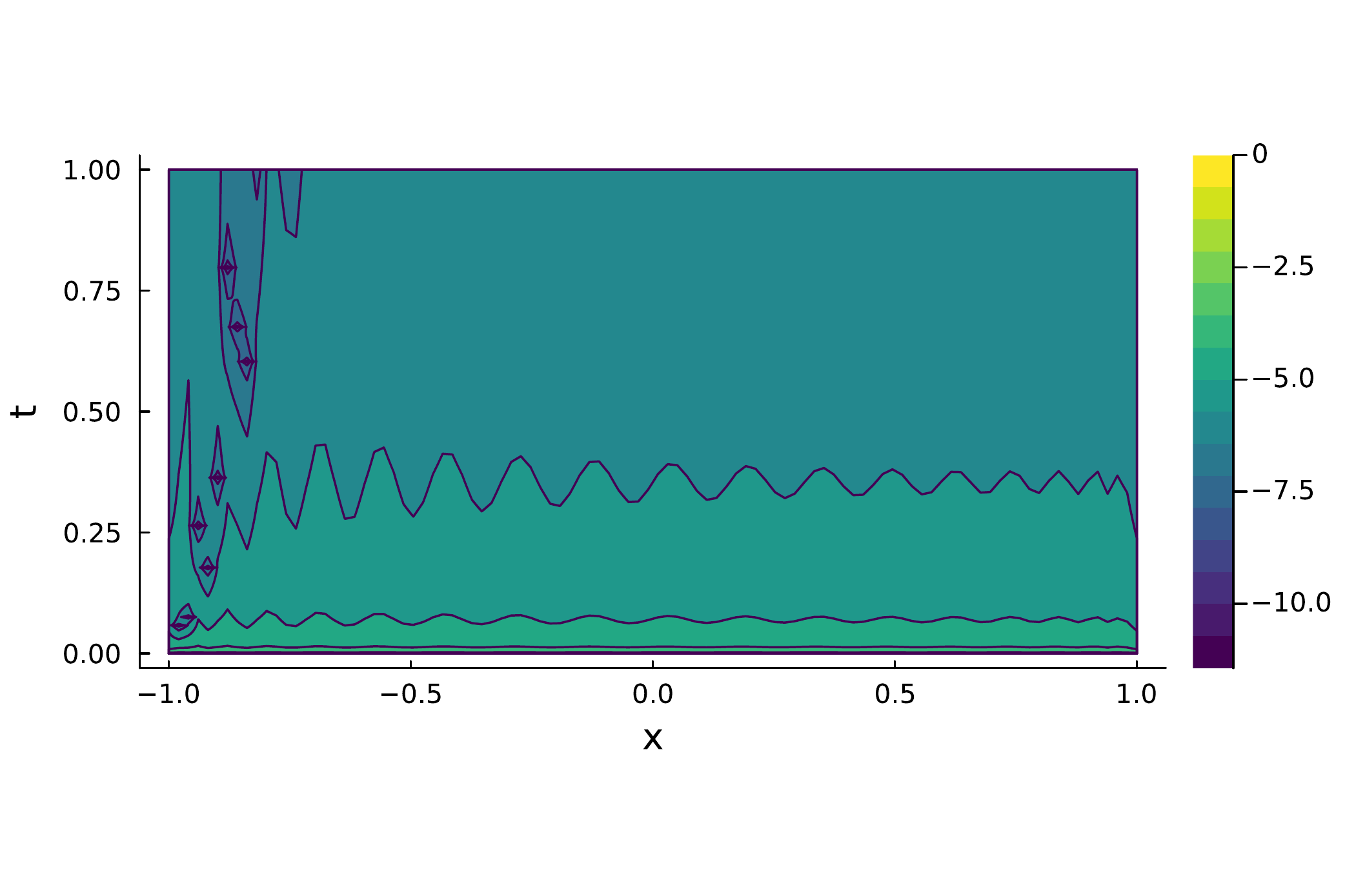} }}
    \caption{Contour plots of the absolute and relative errors for the fractional PDE in Equation \eqref{eq:pdenumericsexamples1} with $k=10$, $c=100$ on the spacetime box $[0,1]\times[-1,1]$. The legend is logarithmic, indicating the order of magnitude of the errors. The numerical solution was computed with $L=50$ quadrature points, approximation degree $K=40$ and with stepsize $\Delta t = 2^{-20}$. See Figure \ref{fig:toyslice} for a slice through $t=1$.}
    \label{fig:toycontour}
    \end{figure}
        \begin{figure} \centering \includegraphics[width=8.5cm]{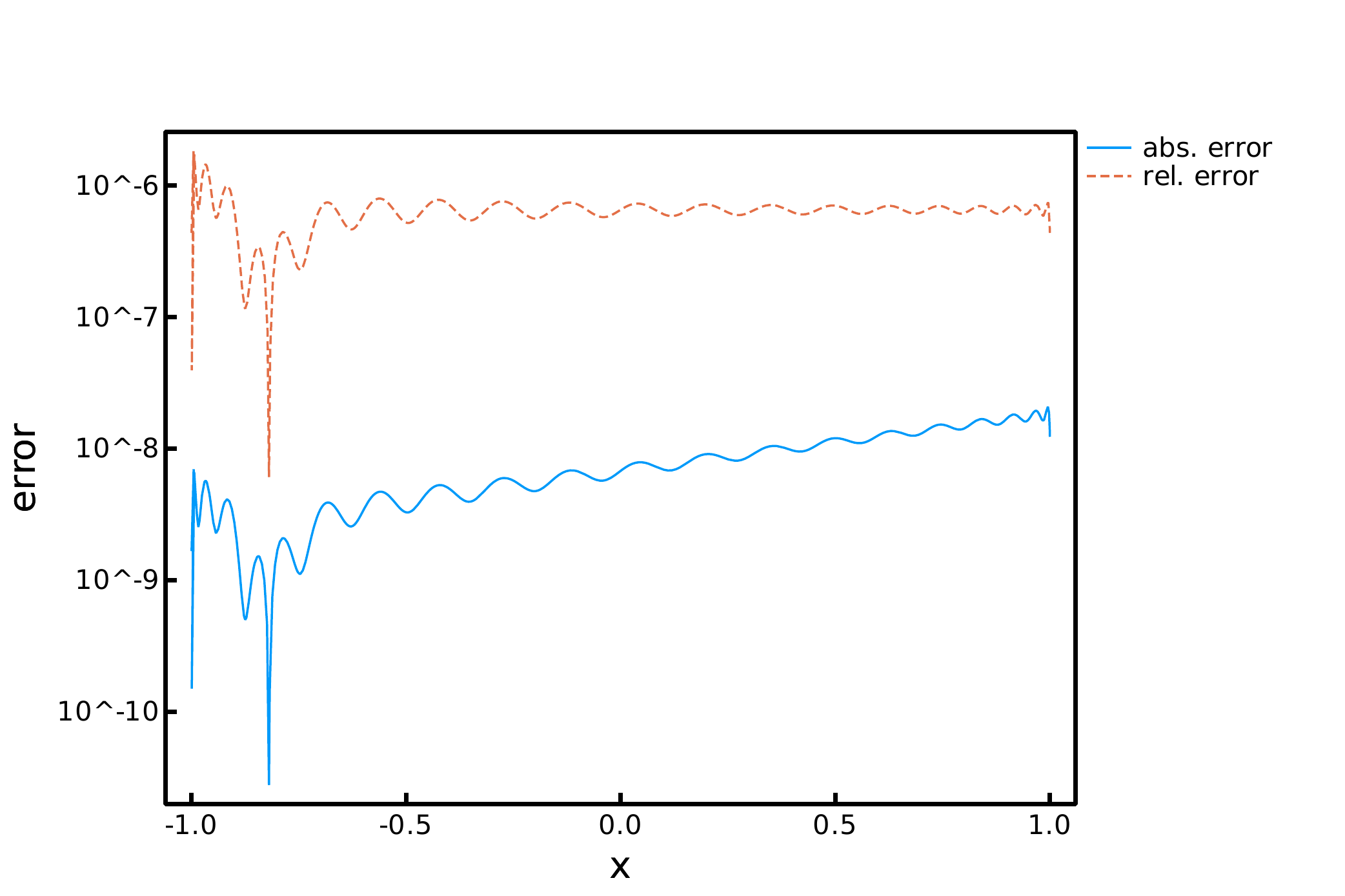} 
    \caption{Absolute and relative error slice at $t=1$ for the experiment in Figure \ref{fig:toycontour}.}
    \label{fig:toyslice}
    \end{figure}
As a final remark for this section, we note that the domain $\Omega \subset \mathbb{R}^d$ determines what constitutes a suitable choice of basis functions and that the method can thus be used in any dimensions as long as a basis suitable for computation is available on the desired domain (although for dimensions $\geq 5$ the curse of dimensionality may cause issues without use of further structure such as symmetries). Canonical higher dimensional domains such as higher dimensional cuboids can be treated using tensor products of Jacobi polynomials while arbitrary dimensional ball domains can make use of higher dimensional generalizations of Zernike polynomials. For further examples we refer to the literature overview in Section \ref{sec:spectralintro}.\\
As we have seen, to ensure that each step is a sparse linear solve, the left and right-hand side orthogonal polynomial bases must be chosen carefully such that any other linear operators appearing in the fractional partial differential equation are accounted for consistently, typically also requiring sparse conversion operators in the process. Due to the many possible combinations of linear operators one may encounter when dealing with fractional PDEs, we show the procedure of ensuring sparsity in more cases in the numerical experiments in Section \ref{sec:numexpsec}.
\section{Memory cost of the history-free method}\label{sec:memorycost}
In this section we investigate how much memory the introduced static memory method requires. We focus only on the memory cost strictly associated with computing the Caputo fractional derivative as the exact cost of the method will otherwise naturally vary depending on the other present differential or integral operators. We count the minimal required values in storage for a solution loop in a diagram in Figure \ref{fig:memoryrequirement}, where $K$ is the length of the coefficient vectors in the orthogonal polynomial basis of choice and $L$ is the number of quadrature points chosen in Eq. \eqref{eq:pdeversionofapprox}. The number of values required in storage is found to be $L(2 + K)+2 K$ and we observe that this memory requirement is entirely independent of the number of time steps taken to reach the final desired time $T=N \Delta t$. We also note that for most problems of interest one expects $L \ll N$. In practice the method's efficiency and accuracy can benefit from keeping some additional repeatedly used static values in memory but this additional cost is also $O(L)$ rather than $O(N)$, which is the typical cost associated with classical methods cf. \cite{jin2023numerical}. Whether the obtained memory footprint is smaller than that of a short memory principle approach will vary on a problem by problem basis, as the cutoff at which the incurred error remains sensible varies -- the question boils down to how the smallest sensible memory cutoff $N_\ell$ compares to the required number of quadrature points $L$ (note that nonclassical methods do not rely on short term memory cutoffs) and how much is known about the expected behavior of the solutions.
\begin{figure}[htbp]
\begin{center}
\includegraphics[width=9cm]{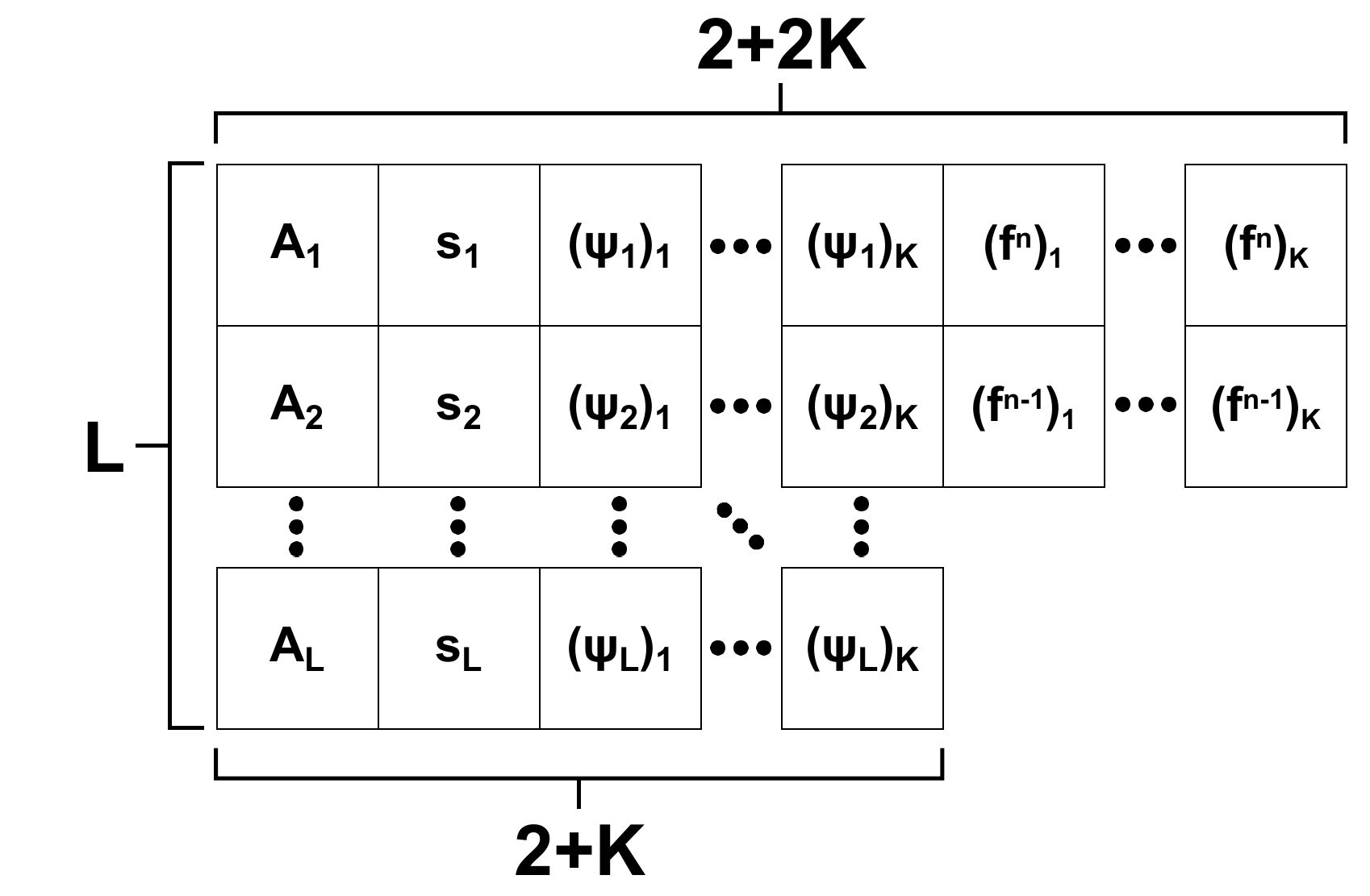}
\caption{Diagram of memory cost associated with solving the Caputo derivative part of a fractional PDE using the method described in this paper, where $K$ is the length of the coefficient vectors in the orthogonal polynomial basis of choice and $L$ is the number of quadrature points chosen in Eq. \eqref{eq:pdeversionofapprox}. The number of values required in storage is $L(2 + K)+2 K$.}
\label{fig:memoryrequirement}
\end{center}
\end{figure}
\section{Error analysis and computational complexity}\label{sec:error}
In this section we provide a discussion of the error as well as computational complexity associated with our method. A general discussion of the error of both the original Yuan-Agrawal method and Diethelm's modification was already given by Diethelm himself in Theorem 11 of \cite{diethelm2008investigation}.
Note that, disregarding the independent spatial error, Diethelm's proof assumes that the differential equation in Eq. \eqref{eq:diffeqphiYA} is solved for the auxiliary functions rather than the more efficient recurrence relationship approach. If we were to assume that the recursive approach performs at least as well as a differential equation solver in each step then Diethelm's error estimate holds similarly for our suggested solver as long as the independent spatial error is also taken into account. The numerical stability of recurrence relationships is a subtle matter which has received a large amount of attention in the literature, see e.g. \cite{olver1972note}. In Section \ref{sec:numexpsec} we will present some numerical experiments regarding the stability of the recurrence relationship to convince ourselves of the applicability of the proposed method. \\
In what follows we provide an error analysis for the recursive approach to time-fractional Caputo derivatives including resolving spatial dependence in a basis of orthogonal polynomials. The results which follow apply for the direct computation of the Caputo derivative rather than fractional differential equations, keeping in mind that for a particular fractional PDE the error will also be affected by what method we use to deal with the remaining differential or integral operators. For the sake of brevity we also only discuss Diethelm's quadrature rule and fractional orders $0 \leq \alpha \leq 1$ but note that analogous results for the Birk--Song quadrature rule and higher orders can be derived in the same way.
\begin{lemma}\label{lem:quaderror}
Let $\bar{P}_n^{(\bar{\alpha},-\bar{\alpha})}(x)$ with $\bar{\alpha} := 2\alpha-2\lceil \alpha\rceil +1 \in (-1,1)$ denote the monic Jacobi polynomial of order $n$ and $\langle\cdot,\cdot\rangle_{(a,b)}$ denote the inner product with respect to the corresponding Jacobi weight, i.e.:
$$\langle f,g \rangle_{(a,b)} := \int_{-1}^1 f(t) g(t) (1-t)^{a}(1+t)^{b} \mathrm{d}t.$$
Furthermore, let $\bar{\phi}_{T,\mathbf{x}}(\kappa) := \bar{\phi}_f(\kappa,T,\mathbf{x})$ with $\bar{\phi}_f$ defined in Eq. \eqref{eq:Diethelmphi}. If $\bar{\phi}_{T,\mathbf{x}} \in C^{2L}[0,T]$ then the quadrature error incurred from Diethelm's method as described in Section \ref{sec:previouswork} is given by
\begin{align*}
\left(\frac{\partial^\alpha}{\partial t ^\alpha} f\right)(T,\mathbf{x}) - \sum_{j=1}^L A_j \psi_j(T,\mathbf{x}) =  \frac{\bar{\phi}_{T,\mathbf{x}}^{(2L)}(\xi)}{(2L)!} \langle \bar{P}^{(\bar{\alpha},-\bar{\alpha})}_L,\bar{P}^{(\bar{\alpha},-\bar{\alpha})}_L\rangle_{(\bar{\alpha},-\bar{\alpha})},
\end{align*}
for some $\xi \in [0,T]$.
\end{lemma}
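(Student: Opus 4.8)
The plan is to recognize the nonclassical sum $\sum_{j=1}^L A_j \psi_j(T,\mathbf{x})$ as exactly an $L$-point Gauss--Jacobi quadrature applied to the Diethelm-transformed integrand, so that the asserted error formula reduces to the classical Gauss quadrature remainder theorem. First I would invoke Theorem \ref{thm:generalyuanagrawal} to write the exact Caputo derivative as the half-infinite integral $\int_0^\infty \phi_f(w,T,\mathbf{x})\,\mathrm{d}w$, treating the spatial variable $\mathbf{x}$ as a passive parameter throughout. Next I would apply Diethelm's Cayley-type substitution $w = (1-\kappa)/(1+\kappa)$ together with the definition \eqref{eq:Diethelmphi} of $\bar\phi_f$ to rewrite this exactly as the Jacobi-weighted integral $\int_{-1}^1 \bar\phi_{T,\mathbf{x}}(\kappa)(1-\kappa)^{\bar\alpha}(1+\kappa)^{-\bar\alpha}\,\mathrm{d}\kappa$, which is the change of variables already recorded in Section \ref{sec:previouswork}.

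The crucial step is to verify that $\sum_{j=1}^L A_j \psi_j(T,\mathbf{x})$ coincides with the $L$-point Gauss--Jacobi quadrature $\sum_{j=1}^L \lambda_j \bar\phi_{T,\mathbf{x}}(p_j)$ of that integral, where the $p_j$ are the nodes (zeros of $\bar P_L^{(\bar\alpha,-\bar\alpha)}$) and the $\lambda_j$ are the Gauss weights. For $\alpha\in(0,1)$ I would substitute the node $s_j = (1-p_j)/(1+p_j)$ into $\phi_f$ and use the identity $\psi_j(T,\mathbf{x}) = \int_0^T e^{-s_j^2(T-\tau)}\partial_\tau f\,\mathrm{d}\tau$ from \eqref{eq:auxiliarydef} to obtain $\phi_f(s_j,T,\mathbf{x}) = \tfrac{2\sin(\pi\alpha)}{\pi}s_j^{\bar\alpha}\psi_j(T,\mathbf{x})$. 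Feeding this into \eqref{eq:Diethelmphi}, the weight powers $(1-\kappa)^{-\bar\alpha}(1+\kappa)^{\bar\alpha}$ cancel against $s_j^{\bar\alpha}=(1-p_j)^{\bar\alpha}(1+p_j)^{-\bar\alpha}$, leaving $\bar\phi_{T,\mathbf{x}}(p_j) = \tfrac{4\sin(\pi\alpha)}{\pi(1+p_j)^2}\psi_j(T,\mathbf{x})$. Multiplying by $\lambda_j$ and comparing against the second row of Table 1 shows $\lambda_j\bar\phi_{T,\mathbf{x}}(p_j) = A_j\psi_j(T,\mathbf{x})$ term by term, which is precisely the required identification.

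Finally I would invoke the classical Gauss quadrature remainder: for an $L$-point rule exact on polynomials of degree $2L-1$ with weight $w=(1-\kappa)^{\bar\alpha}(1+\kappa)^{-\bar\alpha}$, the error on a $C^{2L}$ integrand $g$ equals $\tfrac{g^{(2L)}(\xi)}{(2L)!}\int_{-1}^1 \bar P_L(\kappa)^2 w(\kappa)\,\mathrm{d}\kappa$ for some interior $\xi$, with $\bar P_L = \bar P_L^{(\bar\alpha,-\bar\alpha)}$ the monic orthogonal polynomial. The remaining integral is exactly $\langle \bar P_L^{(\bar\alpha,-\bar\alpha)}, \bar P_L^{(\bar\alpha,-\bar\alpha)}\rangle_{(\bar\alpha,-\bar\alpha)}$, so applying the theorem with $g = \bar\phi_{T,\mathbf{x}}$ yields the stated formula.

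I expect the main obstacle to be the bookkeeping in the identification step: one must simultaneously track the transformation Jacobian, the $w^{\bar\alpha}$ prefactor in $\phi_f$, and the weight powers in \eqref{eq:Diethelmphi}, and confirm they collapse to exactly the $A_j$ tabulated in Table 1. The classical remainder theorem itself contributes no difficulty beyond requiring that $\bar\phi_{T,\mathbf{x}}$ have $2L$ continuous derivatives on the quadrature interval; I note in passing that the $C^{2L}$ hypothesis should be read on the $\kappa$-domain $[-1,1]$ (equivalently $\xi\in[-1,1]$) rather than on $[0,T]$, since the mean value form applies to $\bar\phi_{T,\mathbf{x}}$ as a function of the transformed variable.
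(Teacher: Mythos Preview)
Your proposal is correct and follows essentially the same route as the paper: identify $\sum_j A_j\psi_j$ with the Gauss--Jacobi quadrature sum $\sum_j \lambda_j\bar\phi_{T,\mathbf{x}}(p_j)$ via Table~1 and the definitions of Section~\ref{sec:previouswork}, then read off the remainder from the classical Gauss quadrature error theorem (the paper cites \cite[Theorem~3.6.24]{stoer1996introduction}). Your version merely spells out the cancellation of the $s_j^{\bar\alpha}$ factor against the weight powers in \eqref{eq:Diethelmphi} more explicitly than the paper does, and your closing remark that the $C^{2L}$ hypothesis and the mean-value point $\xi$ properly live on the $\kappa$-interval $[-1,1]$ rather than on $[0,T]$ is a valid observation about the statement as written.
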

\begin{proof}
It is a classical result, cf. \cite[Theorem 3.6.24]{stoer1996introduction}, that if $g \in C^{2L}[a,b]$ quadrature methods with monic polynomials $\bar{P}_n(x)$ orthogonal with respect to $w(x)$ on $[a,b]$ satisfy
\begin{align*}
\int_a^b w(x) g(x) \mathrm{d}x - \sum_{j=1}^{L} \lambda_j g(p_j) = \frac{g^{(2L)}(\xi)}{(2L)!} \langle \bar{P}_L,\bar{P}_L \rangle,
\end{align*}
for some $\xi \in (a,b)$ where $\langle \cdot, \cdot \rangle $ is the inner product with respect to the appropriate orthogonality weight. To use this result we have to translate our compact notation into a sligtly more cumbersome one: Using Table 1 and the definitions in Section \ref{sec:previouswork} we find that for Diethelm's method
\begin{align*}
\sum_{j=1}^L A_j \psi_j(T,\mathbf{x})  =  \sum_{j=1}^L \lambda_j\left(\frac{(-1)^{\lfloor \alpha \rfloor} \sin(\pi \alpha)}{\pi} \frac{4\psi_j(T,\mathbf{x})}{(1+p_j)^2}\right)
&= \sum_{j=1}^L \lambda_j \bar{\phi}_f(p_j,T,\mathbf{x}).
\end{align*}
The stated error expression follows as an immediate corollary.
\end{proof}
\begin{definition}
We denote by $\mathcal{P}_K$ the canonical projection operator corresponding to truncation-to-degree-$K$, i.e.: $$\mathbf{P}(\mathbf{x})\mathcal{P}_K \bm{g} = \sum_{k=0}^K g_k P_k(\mathbf{x}).$$
\end{definition}
\begin{definition}
We denote by $\mathrm{err}_{\mathbf{P},K}(g(\mathbf{x}))$ the error incurred from truncating the polynomial expansion of a function $g$ in the basis $\mathbf{P}(\mathbf{x})$ at degree $K$, i.e.:
\begin{align*}
\mathrm{err}_{\mathbf{P},K}(g(\mathbf{x})) = | \mathbf{P}(\mathbf{x})\bm{g}   - \mathbf{P}(\mathbf{x})\mathcal{P}_K \bm{g}   |.
\end{align*}
\end{definition}
While for sufficiently nice functions one generally expects spectral convergence in the truncation degree when expanding functions in orthogonal polynomials, cf. \cite[Chapter 21]{trefethen2019approximation} and \cite[Section 2.3]{boyd2001chebyshev}, more generally the study of these truncation errors is an active field of research in its own right, see \cite{hamzehnejad2022improved,wang2018new,wang2021much,zhao2013sharp,xiang2012error,wang2012convergence,wang2016optimal} and the references therein. We will make no attempt to fully cover the range of different error bounds for all the various different basis choices and domains our method can be applied with and instead settle for an abstract result in which the error caused by the polynomial approximation in space is separated from the time-fractional Caputo contribution.
\begin{proposition}\label{prop:errorjacobiabstract}
Let $f_\mathbf{x}(t) := f(t,\mathbf{x})$ with $ f_\mathbf{x} \in C^{1 +\left\lceil \alpha \right\rceil}[0,T]$ and $\bar{\phi}_{T,\mathbf{x}}(\kappa) := \bar{\phi}_f(\kappa,T,\mathbf{x})$ with $\bar{\phi}_{T,\mathbf{x}} \in C^{2L}[0,T]$. Furthermore, let $\bm{f}_\alpha^{N}$ denote the coefficient vector such that $$\left(\frac{\partial^\alpha}{\partial t ^\alpha}f\right)(T,\mathbf{x}) = \mathbf{P}(\mathbf{x})\bm{f}_\alpha^{N}.$$ We assume the quadrature-related constants $A_j$ and $s_j$ are obtained with negligible error and define the following non-negative functions:
\begin{align*}
M(\mathbf{x}) &= \max_{\xi_1 \in [0, T]} \left| \bar{\phi}_T^{(2L)}(\xi_1,\mathbf{x})\langle P^{(\bar{\alpha},-\bar{\alpha})}_L,P^{(\bar{\alpha},-\bar{\alpha})}_L\rangle_w \right|,\\
C(\mathbf{x}) &=   \tfrac{1}{2}\max_{j}|A_j| \max_{\xi_2 \in [0,T]} \left| \left(\tfrac{\partial^2}{\partial t^2}f\right)(\xi_2,\mathbf{x})\right|.
\end{align*}
Then the error incurred from using the recursive degree-$K$-truncated spectral method as described in Section \ref{sec:recursivesection} to compute the time-fractional Caputo derivative of order $0 \leq \alpha \leq 1$ of a function $f(t,\mathbf{x})$ at time $T$ using Diethelm's quadrature rule and a uniform time grid of step size $\Delta t = \frac{T}{N}$ is bounded by
\begin{align*}
&\left| \left(\frac{\partial^\alpha}{\partial t ^\alpha}f\right)(T,\mathbf{x}) - \mathbf{P}(\mathbf{x})\mathcal{P}_K\sum_{j=1}^L A_j  \bm{\psi}_j^N   \right| \\ & \qquad \qquad \leq \frac{M(\mathbf{x})}{(2L)!} + C(\mathbf{x}) L T \Delta t + \mathrm{err}_{\mathbf{P},K}\left(\sum_{j=1}^L A_j \psi_j(T,\mathbf{x})\right).
\end{align*}
\end{proposition}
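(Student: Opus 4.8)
The plan is to bound the total error by a triangle inequality that isolates the three independent sources of error built into the scheme: the Gauss--Jacobi quadrature in the half-infinite variable, the linear-interpolation time stepping of the recurrence \eqref{eq:caputoPDEpsi}, and the degree-$K$ spatial truncation. Writing $\partial_t^\alpha f(T,\mathbf{x})$ for the exact Caputo derivative, I would insert two intermediate quantities: the exact, untruncated quadrature sum $\sum_{j=1}^L A_j \psi_j(T,\mathbf{x}) = \mathbf{P}(\mathbf{x})\sum_{j=1}^L A_j \bm{\psi}_j$, where $\bm{\psi}_j$ denotes the exact coefficient vector of $\psi_j(T,\mathbf{x})$, and its degree-$K$ truncation $\mathbf{P}(\mathbf{x})\mathcal{P}_K\sum_{j=1}^L A_j\bm{\psi}_j$. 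The triangle inequality then produces a quadrature term, a truncation term, and a time-stepping term, which I would treat in turn.

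The quadrature term $\left| \partial_t^\alpha f(T,\mathbf{x}) - \sum_{j=1}^L A_j\psi_j(T,\mathbf{x}) \right|$ is handled directly by Lemma \ref{lem:quaderror}: the stated error equals $\bar{\phi}_{T,\mathbf{x}}^{(2L)}(\xi)\langle \bar{P}_L,\bar{P}_L\rangle/(2L)!$ for some $\xi\in[0,T]$, and replacing the $\xi$-dependent factor by its maximum over $[0,T]$ gives exactly $M(\mathbf{x})/(2L)!$. The spatial truncation term $\left| \mathbf{P}(\mathbf{x})(\mathrm{I}-\mathcal{P}_K)\sum_{j=1}^L A_j\bm{\psi}_j \right|$ is, by the definition of $\mathrm{err}_{\mathbf{P},K}$, precisely $\mathrm{err}_{\mathbf{P},K}\big(\sum_{j=1}^L A_j\psi_j(T,\mathbf{x})\big)$, so this term needs no further work.

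The time-stepping term is the heart of the argument. For each fixed $\mathbf{x}$ and each quadrature node $j$, I would set $e_n := \psi_j(n\Delta t,\mathbf{x}) - \mathbf{P}(\mathbf{x})\bm{\psi}_j^n$ and subtract the approximate recurrence \eqref{eq:caputoPDEpsi} from the exact recurrence \eqref{eq:abstractrecurrencebirksong}. Since the increment in \eqref{eq:trapezoidal} is exactly $\int_{(n-1)\Delta t}^{n\Delta t} e^{-s_j^2(n\Delta t-\tau)}\ell'(\tau)\,\mathrm{d}\tau$ for the piecewise-linear interpolant $\ell$ of $f$, the difference satisfies $e_n = e^{-s_j^2\Delta t}e_{n-1} + r_n$ with $e_0=0$ and local defect $r_n = \int_{(n-1)\Delta t}^{n\Delta t} e^{-s_j^2(n\Delta t-\tau)}\big(\partial_\tau f - \ell'\big)\,\mathrm{d}\tau$. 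Because $0<e^{-s_j^2\Delta t}\le 1$ the accumulation is non-amplifying, giving $e_N = \sum_{n=1}^N e^{-s_j^2(N-n)\Delta t}r_n$ and hence $\left| e_N \right|\le N\max_n\left| r_n \right|$. A standard linear-interpolation estimate bounds $\left| \partial_\tau f - \ell' \right|$ by $\tfrac{\Delta t}{2}\max\left| \partial_t^2 f \right|$ on each subinterval, and using $\int_{(n-1)\Delta t}^{n\Delta t} e^{-s_j^2(n\Delta t-\tau)}\,\mathrm{d}\tau\le\Delta t$ yields $\left| r_n \right|\le \tfrac{\Delta t^2}{2}\max\left| \partial_t^2 f \right|$. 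Summing over the $N=T/\Delta t$ steps and the $L$ nodes, with $\left| A_j \right|\le\max_j\left| A_j \right|$, then collapses to $C(\mathbf{x})\,L\,T\,\Delta t$, matching the middle term.

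The main obstacle I anticipate is the interaction between the time-stepping estimate and the spatial truncation. The pointwise bound above is cleanest for the untruncated error $\mathbf{P}(\mathbf{x})\sum_j A_j(\bm{\psi}_j-\bm{\psi}_j^N)$, so in the chosen decomposition I must verify that the truncation does not spoil it; this follows from the observation that the recurrence \eqref{eq:caputoPDEpsi} acts coefficient-wise, since each index evolves independently under the same scalar multipliers $e^{-s_j^2\Delta t}$ and $(1-e^{-s_j^2\Delta t})/(s_j^2\Delta t)$, so $\mathcal{P}_K$ commutes with the time stepping and the per-coefficient defect recurrence is exactly the scalar one analysed above. Care is also needed to confirm that placing the truncation term on the exact $\psi_j$ rather than on the time-stepped $\psi_j^N$ only shifts a higher-order contribution into the already-controlled time-stepping term, so that the three stated terms together indeed bound the total error.
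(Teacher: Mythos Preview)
Your proposal is correct and follows essentially the same approach as the paper: split by the triangle inequality, invoke Lemma~\ref{lem:quaderror} for the quadrature term to produce $M(\mathbf{x})/(2L)!$, bound the local defect of the linear-interpolation recurrence by $\tfrac{\Delta t^2}{2}\max|\partial_t^2 f|$ (via the same integral estimate $\int_{(n-1)\Delta t}^{n\Delta t} e^{-s_j^2(n\Delta t-\tau)}\,\mathrm{d}\tau\le\Delta t$), accumulate over $N$ steps and $L$ nodes to obtain $C(\mathbf{x})LT\Delta t$, and leave the polynomial truncation term abstract.

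The only structural difference is the ordering of the decomposition. The paper inserts a single intermediate quantity, the \emph{untruncated} time-stepped sum $\mathbf{P}(\mathbf{x})\sum_j A_j\bm{\psi}_j^N$, so that quadrature and time-stepping are bounded together via a perturbation $\delta_j(\mathbf{x})$ of the exact $\psi_j$, and the truncation term lands on the time-stepped quantity (which the paper then identifies with $\mathrm{err}_{\mathbf{P},K}(\sum_j A_j\psi_j(T,\mathbf{x}))$). You instead use a three-term split with truncation applied to the \emph{exact} $\psi_j$, which matches the stated bound directly but forces the time-stepping estimate to be carried out after $\mathcal{P}_K$. Your observation that the recurrence acts coefficient-wise and hence commutes with $\mathcal{P}_K$ is exactly the right device to close this; it in fact makes the interaction you flag in the last paragraph more explicit than the paper's treatment, which simply asserts the identification without comment.
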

\begin{proof}
Using the triangle inequality we obtain
\begin{align*}
&\left| \left(\frac{\partial^\alpha}{\partial t ^\alpha}f\right)(T,\mathbf{x}) - \mathbf{P}(\mathbf{x})\mathcal{P}_K\sum_{j=1}^L A_j  \bm{\psi}_j^N   \right| = \left| \mathbf{P}(\mathbf{x})\bm{f}_\alpha^{N} - \mathbf{P}(\mathbf{x})\mathcal{P}_K\sum_{j=1}^L A_j  \bm{\psi}_j^N   \right| \\ & \qquad \leq \left| \mathbf{P}(\mathbf{x})\bm{f}_\alpha^{N} - \mathbf{P}(\mathbf{x})\sum_{j=1}^L A_j  \bm{\psi}_j^N     \right|  + \left| \mathbf{P}(\mathbf{x})\sum_{j=1}^L A_j  \bm{\psi}_j^N   - \mathbf{P}(\mathbf{x})\mathcal{P}_K\sum_{j=1}^L A_j \bm{\psi}_j^N   \right|. 
\end{align*}
Note that the first term, $| \mathbf{P}(\mathbf{x})\bm{f}_\alpha^{N} - \mathbf{P}(\mathbf{x})\sum_{j=1}^L A_j  \bm{\psi}_j^N     | $, now describes an error contribution at each spatial point $\mathbf{x}$ which is solely incurred due to the recursive time-fractional Caputo derivative method. The second term is simply the approximation error of truncating the polynomial expansion of the function $\sum_{j=1}^L A_j \psi_j(T,\mathbf{x})$ in $\mathbf{P}(\mathbf{x})$ which we leave abstract.\\
To obtain a more explicit bound for the first term
we use Lemma \ref{lem:quaderror} to observe that the error added to the series approximation in \eqref{eq:finalbirksong} caused by perturbations $\delta_j(\mathbf{x})$ in the auxiliary functions $\psi_j(t,\mathbf{x})$ is
\begin{align*}
\left(\frac{\partial^\alpha}{\partial t ^\alpha} f\right)(T,\mathbf{x}) &- \sum_{j=1}^L A_j (\psi_j(T,\mathbf{x}) + \delta_j(\mathbf{x})) \\ &= \left(\frac{\partial^\alpha}{\partial t ^\alpha} f\right)(T,\mathbf{x}) - \sum_{j=1}^L A_j \psi_j(T,\mathbf{x}) - \sum_{j=1}^L A_j \delta_j(\mathbf{x})\\
&= \frac{\bar{\phi}_{T,\mathbf{x}}^{(2L)}(\xi)}{(2L)!} \langle P^{(\bar{\alpha},-\bar{\alpha})}_L,P^{(\bar{\alpha},-\bar{\alpha})}_L\rangle_w - \sum_{j=1}^L A_j \delta_j(\mathbf{x}).
\end{align*}
for some $\xi \in [0,T]$. From this we obtain the error bound
\begin{align*}
&\left| \left(\frac{\partial^\alpha}{\partial t ^\alpha} f\right)(T,\mathbf{x}) - \sum_{j=1}^L A_j (\psi_j(T,\mathbf{x}) + \delta_j(\mathbf{x})) \right|\\ & \qquad \leq \max_{\xi_1 \in [0, T]}  \left| \frac{\bar{\phi}_{T,\mathbf{x}}^{(2L)}(\xi_1)}{(2L)!} \langle P^{(\bar{\alpha},-\bar{\alpha})}_L,P^{(\bar{\alpha},-\bar{\alpha})}_L\rangle_w \right| + L  \max_{j}\left( \left|A_j \right| \left|\delta_j(\mathbf{x}) \right| \right).
\end{align*}
We thus seek a bound on $|\delta_j(\mathbf{x})|$ caused by the approximation in \eqref{eq:trapezoidal}. Comparing \eqref{eq:abstractrecurrencebirksong} with the approximation in \eqref{eq:trapezoidal} we find that the $\delta_j$ are the errors caused by integrating the derivative of the linear interpolation. If $f(x)$ is in $C^{q+1}[a,b]$ and $p_{f,q}(x)$ is its interpolating polynomial of degree $\leq q$ at $q+1$ points $x_0, ..., x_q \in [a,b]$, then the remainder term of the interpolation is given by \cite[3.3.11]{nist_2022}
\begin{align*}
f(x) - p_{f,q}(x) = \tfrac{\prod_{i=0}^{q}(x-x_i)}{(q+1)!} f^{(q+1)}(\xi),
\end{align*}
for some $\xi \in [a,b]$. For our case of taking derivatives of the linear interpolation in each time step we thus obtain that there exists a $\xi \in [(n-1)\Delta t,n\Delta t]$ such that
\begin{align*}
    \left(\tfrac{\partial}{\partial \tau}  f \right)(\tau,\mathbf{x})  - 
 \tfrac{f(n\Delta t) - f((n-1)\Delta t)}{\Delta t}  = \tfrac{\Delta t +2 (\tau - n\Delta t) }{2}  \left(\tfrac{\partial^2}{\partial t^2}f\right)(\xi,\mathbf{x}).
 \end{align*}
 Since for all $\tau \in [(n-1)\Delta t,n\Delta t]$ we have $|\tfrac{\Delta t +2(\tau - n\Delta t)}{2}| \leq \tfrac{\Delta t}{2}$ this yields the error bound
 \begin{align*}
\left| \left(\tfrac{\partial}{\partial \tau}  f \right)(\tau,\mathbf{x})  - 
 \tfrac{f(n\Delta t) - f((n-1)\Delta t)}{\Delta t} \right| \leq  \tfrac{\Delta t}{2} \max_{\xi \in [(n-1)\Delta t,n\Delta t]}\left|\left(\tfrac{\partial^2}{\partial t^2}f\right)(\xi,\mathbf{x})\right|.
\end{align*}
In a single time step the error in the integral is thus bounded by
\begin{align*}
\left|\int_{(n-1)\Delta t}^{n\Delta t} \right. &{} \left. e^{-s_j^2 (n\Delta t-\tau)}  \left(\left(\tfrac{\partial}{\partial \tau}  f \right)(\tau,\mathbf{x})  - 
 \tfrac{f(n\Delta t) - f((n-1)\Delta t)}{\Delta t} \right) \mathrm{d}\tau\right|\\
 &\leq \frac{\Delta t}{2}\max_{\xi \in [(n-1)\Delta t,n\Delta t]}\left|\left(\tfrac{\partial^2}{\partial t^2}f\right)(\xi,\mathbf{x})\right|\int_{(n-1)\Delta t}^{n\Delta t} e^{-s_j^2 (n\Delta t-\tau)}\mathrm{d}\tau\\
 & = \frac{\Delta t}{2}  \frac{(1-e^{-s_j^2\Delta t })}{s_j^2} \max_{\xi \in [(n-1)\Delta t,n\Delta t]}\left|\left(\tfrac{\partial^2}{\partial t^2}f\right)(\xi,\mathbf{x})\right|\\
  &\leq \frac{(\Delta t)^2}{2}  \max_{\xi \in [(n-1)\Delta t,n\Delta t]}\left|\left(\tfrac{\partial^2}{\partial t^2}f\right)(\xi,\mathbf{x})\right|,
\end{align*}
resulting in the following error bound for $\delta_j(\mathbf{x})$:
\begin{align*}
    \left| \delta_j(\mathbf{x}) \right| \leq \frac{N (\Delta t)^2}{2} \max_{\xi \in [0,T]}\left|\left(\tfrac{\partial^2}{\partial t^2}f\right)(\xi,\mathbf{x})\right| = \frac{T \Delta t}{2} \max_{\xi \in [0,T]}\left|\left(\tfrac{\partial^2}{\partial t^2}f\right)(\xi,\mathbf{x})\right|.
\end{align*}
Combining this with the above-derived error bounds yields the stated result.
\end{proof}
\begin{remark}
We note that similarly to what was discussed in \cite[Section 3.7]{stoer1996introduction}, such error bounds for quadrature-type methods tend to not be useful for error estimations in actual applications as they include difficult to compute or estimate high order derivative terms. A more practical approach to estimating errors in an application is to check convergence with higher precision results. Nevertheless, the derived error bounds suggest a sensible convergence strategy: Prioritize convergence in $L$, while keeping $L\Delta t$ constant, then adjust $\Delta t$ further if required once convergence in $L$ has been achieved. We verify that this is a sensible convergence strategy in practice in the numerical experiments in the next section.
\end{remark}
As mentioned before, it is straightforward to see from the proof of Proposition \ref{prop:errorjacobiabstract} that choosing higher order accuracy integration methods for the integrals in the recurrence instead of using \eqref{eq:pseudotrapezoid} would also lead to a higher order in $\Delta t$ method for the Caputo derivative.\\

Finally, we address computational complexity and the associated memory footprint. We will suppose here that the desired accuracy is reached with $K$ polynomial coefficients and $L$ quadrature points. First we note that as seen in the numerical experiments in Section \ref{sec:numexpsec} and the worked example in Section \ref{sec:recursivesection}, each time step involves two computations: First, the update of the auxiliary function via the recurrence and second the linear system solve. Additionally, at the beginning of each computation the quadrature-related constants must be computed, although this only has to be done a single time and will thus be considered pre-computation. The recursive update of the auxiliary function is $O(1)$ for each quadrature point and spatial index, cf. Figure \ref{fig:memoryrequirement}, meaning that the computational cost of this update has computational cost $O(KL)$. Meanwhile, the linear solve involves a sparse (banded or block-banded) operator of size $K\times K$ if set up correctly using the ultraspherical spectral method, cf. \cite{olver2013fast,olver2020fast}. The exact computational cost of the second part depends on this sparsity pattern but if it is banded then one expects $O(K)$. Setting the pre-computation of the quadrature constants aside the computational cost in each time step is thus dominated by the computational cost $O(KL)$. Using the ultraspherical spectral method to resolve the spatial aspect of a time-fractional PDE the computational cost of our proposed solver is thus effectively determined by the cost of updating the auxiliary functions using the recursion, which is an efficient process. Coupling a memory cutoff method to the ultraspherical spectral method for comparison can be expected to be at best $O(K N_\ell)$ in each timestep where $N_\ell$ is the amount of time steps which are remembered, cf. \cite[Section 4.2]{jin2023numerical}. Whether the proposed method performs better is thus entirely dependent on the significance of the memory effect of a given equation -- if the number of time steps $N_\ell$ required in memory in the short memory principle method exceed the number of quadrature points $L$ needed to achieve the desired accuracy at time $T$, then the non-classical approach described in this paper will perform better and vice versa. The situation is naturally analogous for the memory footprint. We note however that there are reasons to expect the error behavior of memory cutoff methods to be rather poor in general cases, cf. \cite{ford2001numerical}. Similar observations can be made for the logarithmic memory principle suggested in \cite{ford2001numerical}.
\section{Numerical experiments}\label{sec:numexpsec}
We present four numerical experiments in this section which respectively explore the following subjects:
\begin{enumerate}
\item Recursive nonclassical computation of time-fractional Caputo derivatives.
\item Stability of the recurrence relationship for the auxiliary functions $\psi_j$.
\item A time-fractional heat equation on a triangle.
\item A wave equation dampened by a time-fractional term on a disk.
\end{enumerate}
We note that a worked example which may be considered a numerical experiment was already discussed in Section \ref{sec:recursivesection}. The numerical experiments 3 and 4 were chosen such that the polynomial bases would not simply consist of tensor product domains. Throughout this paper, when discussing error compared to reference solutions we will be considering the pointwise absolute and relative errors 
\begin{align*}
Err_{abs, f}(n,\mathbf{x}) = |\mathbf{P}(\mathbf{x})\bm{f}^n - f(n \Delta t,\mathbf{x})|,\\
Err_{rel,f}(n,\mathbf{x}) = \frac{|\mathbf{P}(\mathbf{x})\bm{f}^n - f(n \Delta t,\mathbf{x})|}{|f(n \Delta t,\mathbf{x})|},
\end{align*}
and indicate which is being considered. We furthermore use Birk--Song's quadrature rule as described in Table 1 throughout this section. As accuracy in the Gaussian quadrature related values $A_j$ and $s_j$ is critical for the method and these values may be precomputed for a given quadrature type without having to ever be re-computed (or one can use lookup tables), we initially use 128-bit floating point numbers and then project them down to 64-bit precision before starting the recurrence loop -- all other values in the numerical experiments are generated and stored as 64-bit floating point numbers.\\
The implementation used for the numerical experiments in this section made use of the open source Julia \cite{beks2017} packages MultivariateOrthogonalPolynomials.jl \cite{noauthor_multivariateorthogonalpolynomialsjl_2021}, FastGaussQuadrature.jl \cite{noauthor_fastgauss_2021} as well as ApproxFun.jl \cite{noauthor_approxfunjl_2021}.
\subsection{Experiment 1: Direct computation of Caputo derivatives}
While constructed for the purpose of solving differential equations, the recursive non-classical approach can nevertheless straightforwardly be used to numerically approximate the Caputo derivative of a known function at a given point. In this section we compare some cases with known explicit Caputo derivatives with numerical approximations obtained via \eqref{eq:caputoODE}. This idea was also explored by Diethelm in \cite{diethelm2008investigation} but his numerical experiments did not include error plots and were limited by small quadrature point counts, a problem which we are unencumbered thanks to \cite{bogaert_iterationfree_2014,glaser_fast_2007,townsend2016fast} and implementations such as FastGaussQuadrature.jl \cite{noauthor_fastgauss_2021}, cf. \cite{townsend2015race}. We make use of the following explicitly given Caputo derivatives for these numerical experiments, cf. \cite[Sec. 3.1]{herrmann_fractional_2014}:
\begin{align}\label{eq:numexp1polynomial}
\frac{\partial^\alpha}{\partial t^\alpha}(t^2) &= \frac{2t^{2-\alpha}}{\Gamma(3-\alpha)}, \quad \alpha \in [0,1],\\
\frac{\partial^{\nicefrac{1}{2}}}{\partial t^{\nicefrac{1}{2}}}E_{a,1}(t) &= -\frac{1}{\sqrt{t \pi}} + \underset{k=0}{\overset{\infty }{\sum }}\frac{k! t^{k-\frac{1}{2}}}{\Gamma \left(k+\frac{1}{2}\right) \Gamma (1+a k)}, \quad a>0,\label{eq:numexp2mittag}
\end{align}
where $E_{a,b}(z)$ denotes the two-parameter Mittag-Leffler function \cite{mittag1903nouvelle,wiman1905fundamentalsatz}, which is defined by (cf. \cite[10.46.3]{nist_2022}):
\begin{align*}
E_{\alpha, \beta} (z) := \sum_{k=0}^\infty \frac{z^k}{\Gamma(\alpha k + \beta)}.
\end{align*}
We furthermore have (cf. \cite[Chapter 4]{diethelm_analysis_2010})
\begin{align*}
E_{1, 1}(z) &= e^z,
\end{align*}
which via \eqref{eq:numexp2mittag} yields
\begin{align}\label{eq:numexp3exp}
\frac{\partial^{\nicefrac{1}{2}}}{\partial t^{\nicefrac{1}{2}}}e^t = e^t \text{erf}\left(\sqrt{t}\right).
\end{align}
In Figures \ref{fig:rawcaputoerror} and \ref{fig:rawcaputoerror2} we show error plots comparing recursively computed numerical approximations to the explicitly known solutions in \eqref{eq:numexp1polynomial} and \eqref{eq:numexp3exp} with varying time step sizes $\Delta t$ and quadrature nodes $L$ respectively. Note that as suggested by the error bounds in Section \ref{sec:error} we observe linear improvement in accuracy in $\Delta t$ once sufficient convergence in $L$ has been achieved.
\begin{figure}\centering
     \subfloat[$f(t) = t^2$, $L=65$, $\alpha = \frac{2}{3}$]
    {{ \centering \includegraphics[width=9cm]{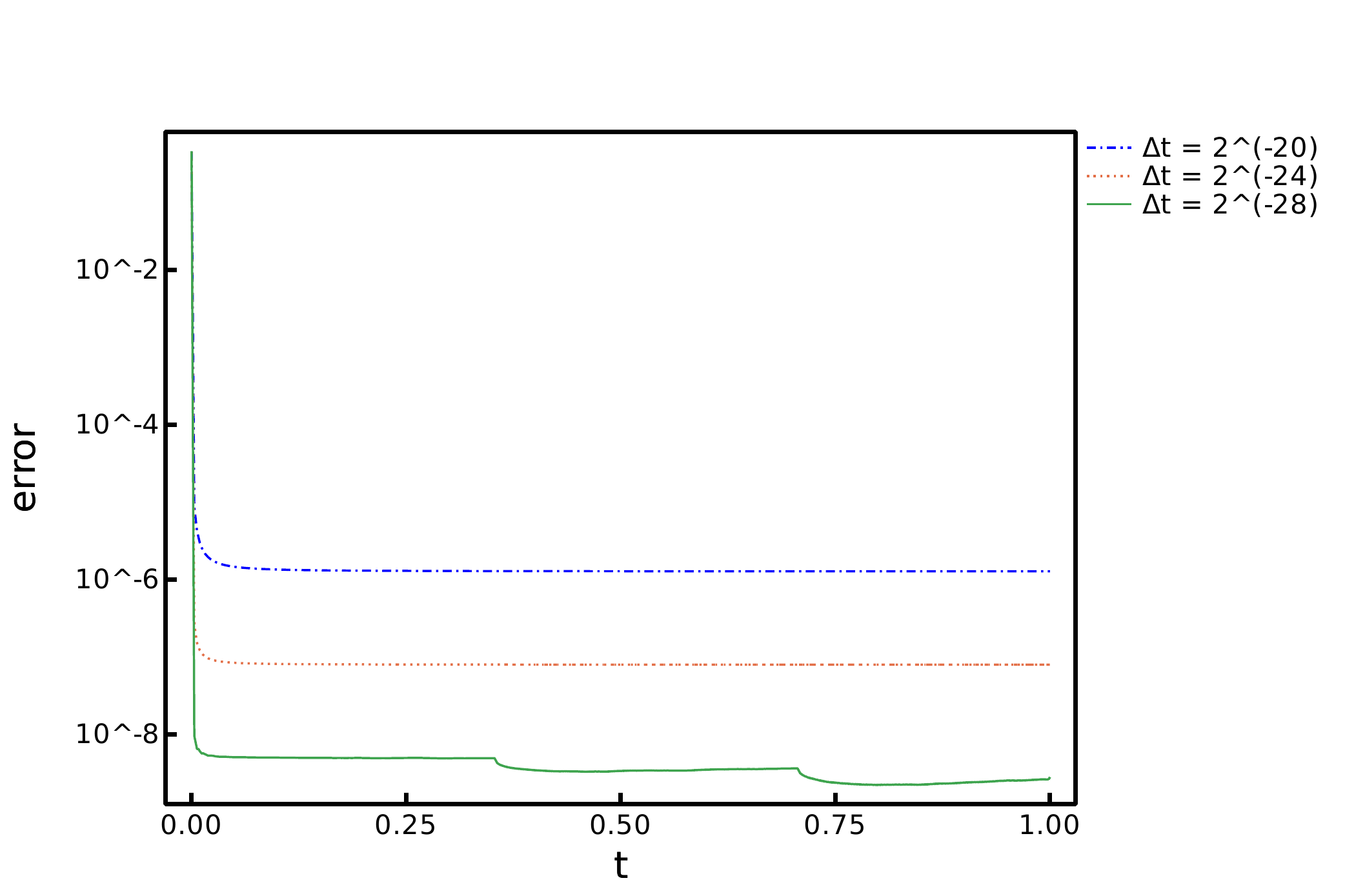} }}\\
     \subfloat[$f(t) = e^t$, $L=60$, $\alpha = \frac{1}{2}$]
    {{ \centering \includegraphics[width=9cm]{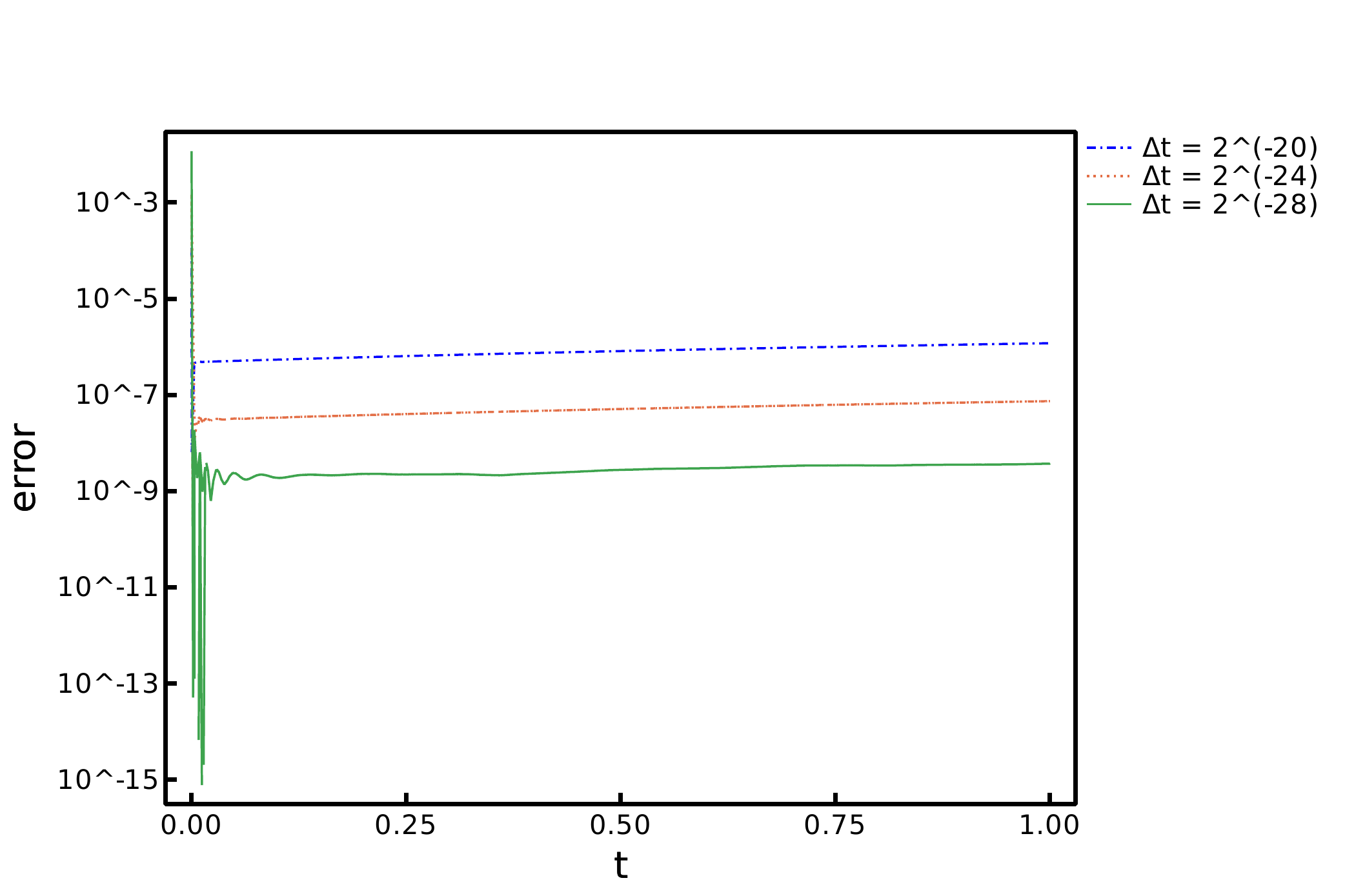} }}
    \caption{Relative errors between analytic and recursively computed Caputo derivatives of indicated $f(t)$ with number of quadrature points $L$ for various time step sizes $\Delta t$.}
    \label{fig:rawcaputoerror}
    \end{figure}
    \begin{figure}\centering
     \subfloat[$f(t) = t^2$, $\alpha = \frac{1}{4}$]
    {{ \centering \includegraphics[width=9cm]{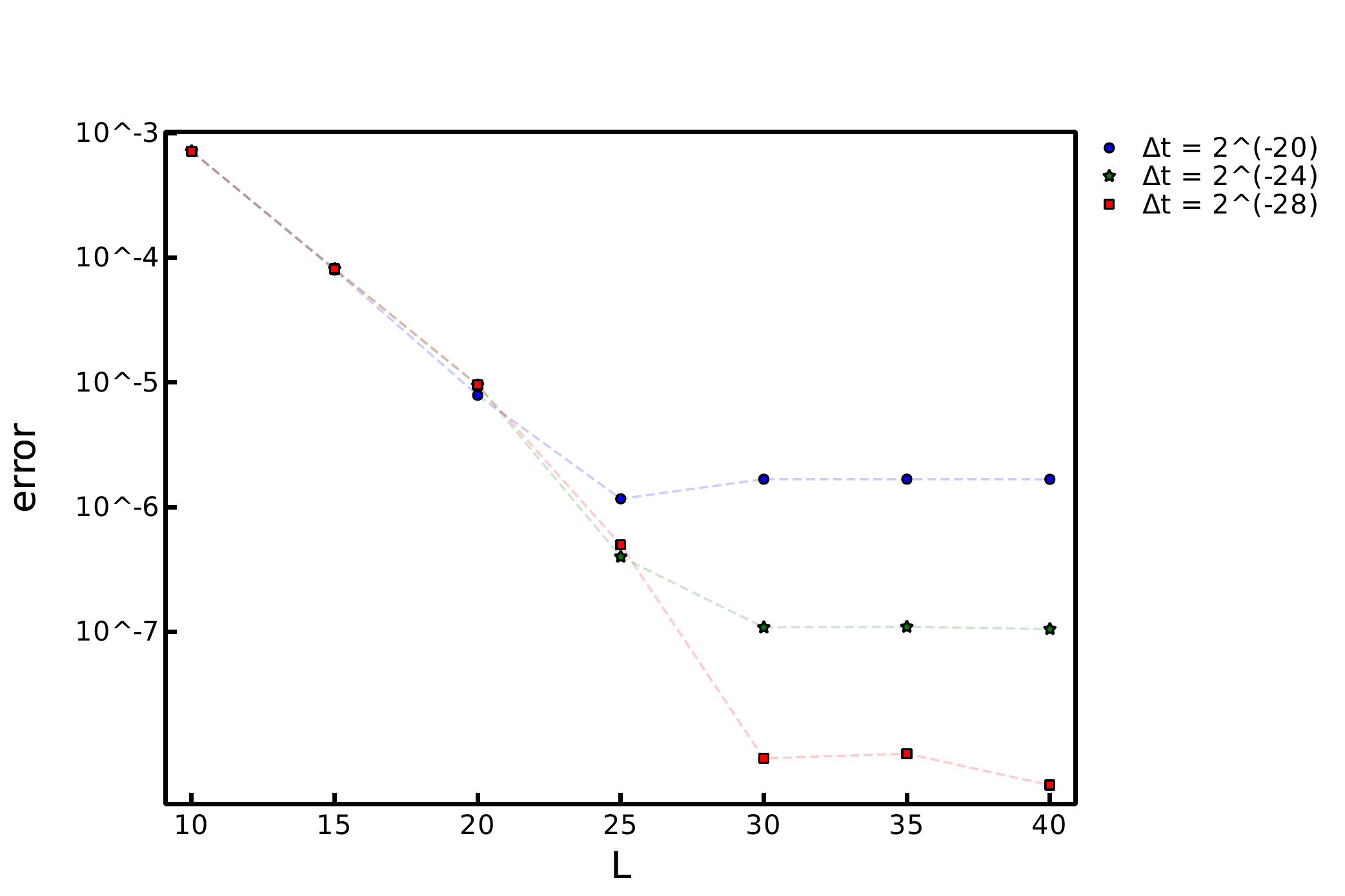} }}\\
     \subfloat[$f(t) = e^t$, $\alpha = \frac{1}{2}$]
    {{ \centering \includegraphics[width=9cm]{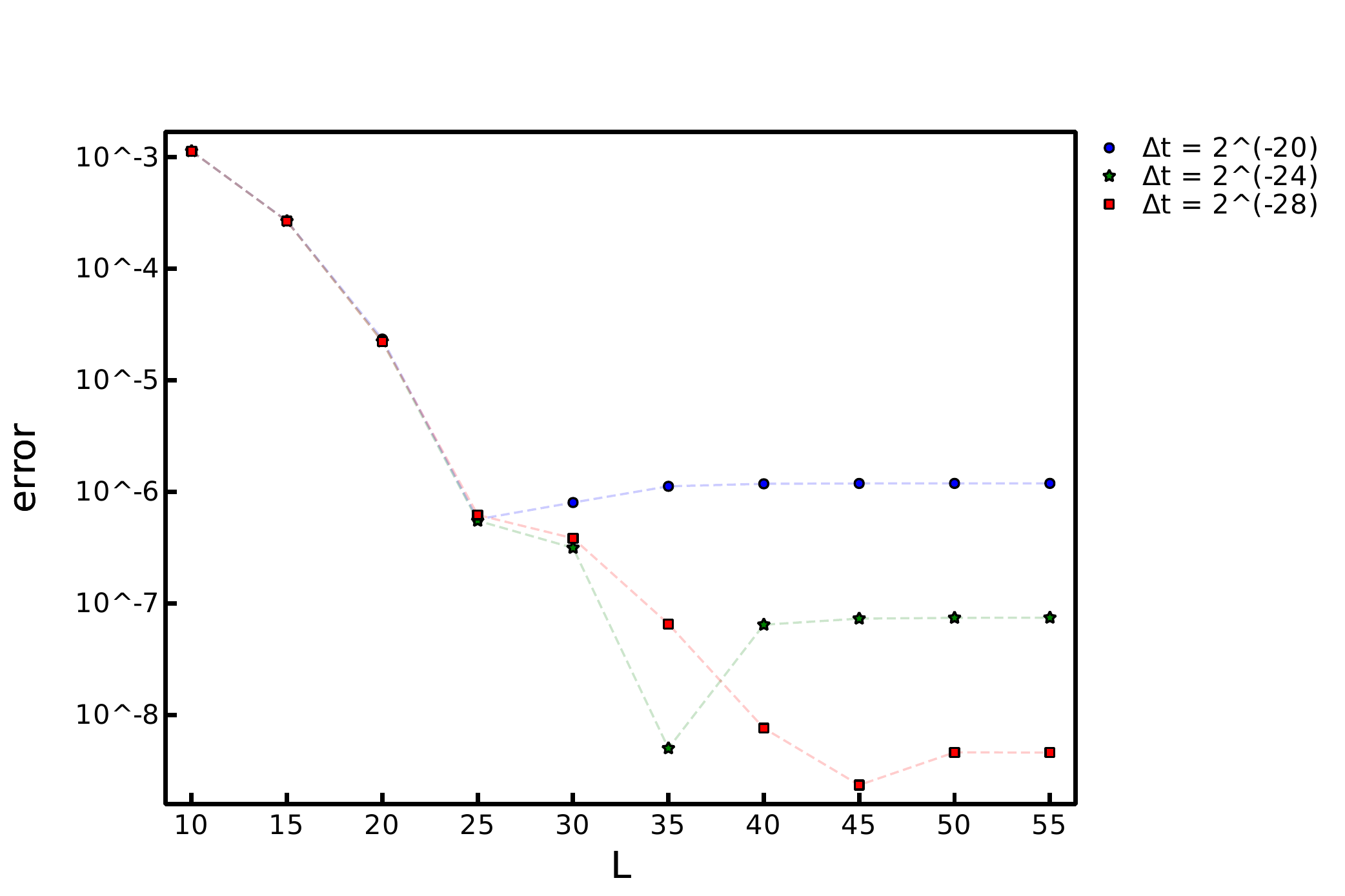} }}
    \caption{Relative errors between analytic and recursively computed Caputo derivatives of indicated $f(t)$ at the end point of the time domain $t \in [0,1]$ as a function of the number of quadrature points $L$ used with various time step sizes $\Delta t$.}
    \label{fig:rawcaputoerror2}
    \end{figure}
\subsection{Experiment 2: Stability of the auxiliary function recurrence}\label{sec:numexppsi}
A remaining concern with the proposed method may be that floating point arithmetic can cause recurrence relationships to be unstable. In this section, we thus provide some numerical experiments in which we compare full domain numerical integration in \eqref{eq:auxiliarydef} to the recurrence approach in \eqref{eq:caputoPDEpsi} to see how this error behaves in practice, using the explicitly known solutions of the previous numerical experiment as a template.\\
Figures \ref{fig:psistability1} and \ref{fig:psistability2} show the error between the recurrence and full domain approaches to computing the auxiliary functions $\psi_j$. Consistent with the error analysis in Section \ref{sec:error}, we observe that if $L$ is low, reducing $\Delta t$ does not improve the error. As suggested by the error bounds discussion in Section \ref{sec:error}, one should thus increase $L$ to convergence before excessively refining the step size $\Delta t$.
\begin{figure}\centering
    {{ \centering \includegraphics[width=10cm]{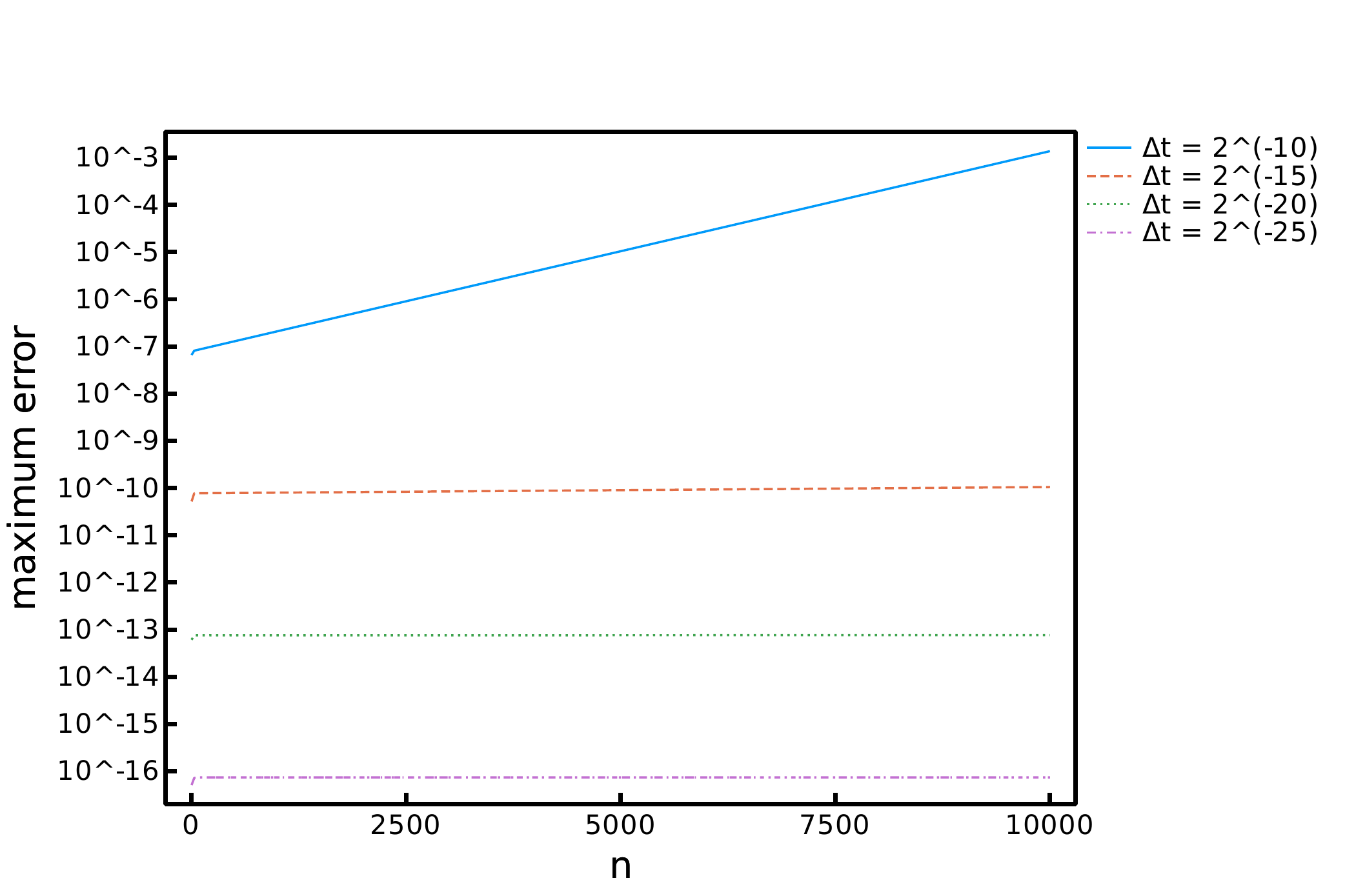} }}
    \caption{Maximal-in-$j$ absolute errors at each time step between auxiliary functions $\psi_j$ after $n$ time steps of indicated size $\Delta t$ computed via the recurrence and full domain quadrature for the problem in \eqref{eq:numexp3exp} with $L = 30$ quadrature points using the Birk--Song quadrature rule. For a high $n$ endpoint experiment see Figure \ref{fig:psistability2}.}
    \label{fig:psistability1}
    \end{figure}
\begin{figure}\centering
    {{ \centering \includegraphics[width=8.5cm]{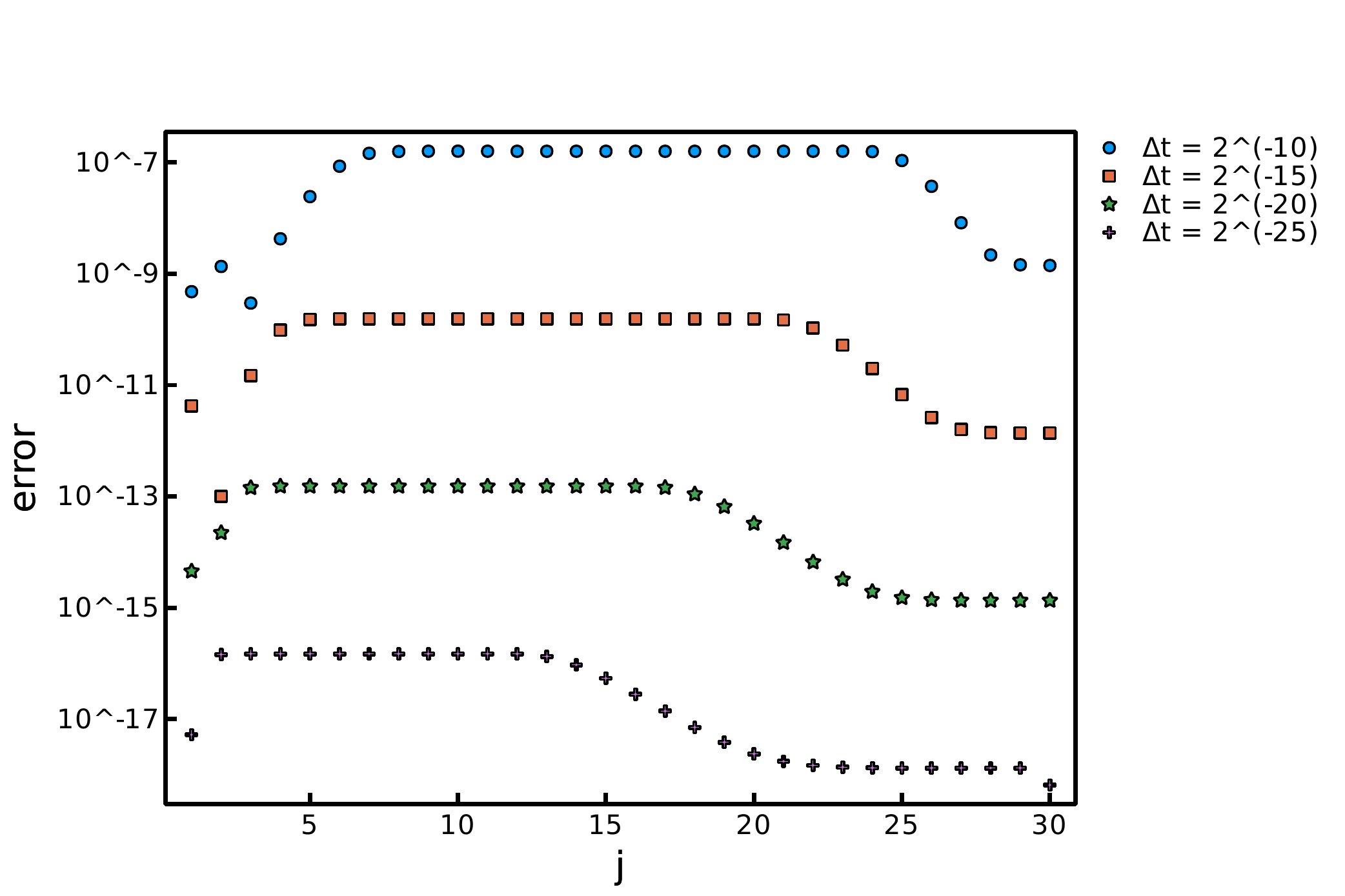} }}
    \caption{Absolute errors between auxiliary functions $\psi_j$, with $j$ on the $x$-axis, after 10 million time steps of indicated size $\Delta t$ computed via the recurrence and full domain approach for the problem in \eqref{eq:numexp1polynomial} with $L = 30$ quadrature points and $\alpha = \frac{1}{2}$ using the Birk--Song quadrature rule. For an error over time experiment see Figure \ref{fig:psistability1}.}
    \label{fig:psistability2}
    \end{figure}
\subsection{Experiment 3: A time-fractional heat equation on the triangle}
In this section we use the Proriol polynomials $\mathbf{P}^{(a, b, c)} (x,y)$ which are orthogonal on the triangle domain $$\Omega_T = \left\{ (x,y) : 0 \leq x, 0 \leq y \leq 1-x  \right\}$$ with respect to the weight function $w(x,y) = x^a y^b (1-x-y)^c$, cf. \cite{olver_sparse_2019,dunkl_orthogonal_2014}, to solve the time-fractional heat equation
\begin{align}\label{eq:trianglenumexp}
\frac{\partial^\alpha}{\partial t^\alpha} f - k \Delta f = 0,\\
f(0,x,y) = f_0(x,y),
\end{align}
with zero Dirichlet conditions on the boundary of $\Omega_T$. While non-zero Dirichlet boundary conditions would require the use of restriction operators \cite{olver_sparse_2019}, zero Dirichlet conditions can be enforced by resolving $f(x,y)$ in the weighted Proriol polynomial basis $$\mathbf{W}^{(1,1,1)}(x,y) = xy(1-x-y)\mathbf{P}^{(1,1,1)} (x,y).$$ The Laplacian $\Delta$ then maps $\mathbf{W}^{(1,1,1)}(x)$ to $\mathbf{P}^{(1,1,1)}(x)$ and a sparse conversion operator $\mathcal{C}$ for this basis map is also available, see Figure \ref{fig:spytriangle} for sparsity pattern plots. We thus obtain the recursive non-classical method
\begin{align}\label{eq:trianglemethod}
&\mathbf{P}^{(1,1,1)}(x,y) \left( - k\Delta +  \left( \sum_{j=1}^L A_j \frac{(1-e^{-s_j^2 \Delta t})}{s_j^2 \Delta t} \right)\mathcal{C}  \right) \bm{f}^n \\& \quad =  \mathbf{P}^{(1,1,1)}(x,y) \mathcal{C}  \left(\left( \sum_{j=1}^L A_j \frac{(1-e^{-s_j^2 \Delta t})}{s_j^2 \Delta t} \right)\bm{f}^{n-1} - \left( \sum_{j=1}^L A_j e^{-s_j^2\Delta t} \bm{\psi}_j^{n-1}\right) \right),\nonumber
\end{align}
valid for the heat equation with zero Dirichlet boundary conditions on the triangle domain $\Omega_T$ and $k>0$. Note that the solution obtained via the above scheme is $$f(N\Delta t,x,y) \approx \mathbf{W}^{(1,1,1)}(x,y) \bm{f}^N.$$ In Figure \ref{fig:heattriangleexample} we plot numerical solutions obtained for the time-fractional heat equation on the triangle with $k=\frac{1}{500}$ with fractional order $\alpha = \frac{3}{5}$ and initial condition
\begin{align}\label{eq:initialconditiontriangle}
f_0(x,y) =  100xy(1-x-y)^2e^{-10\left((x-0.6)^2+(y-0.2)^2\right)}. 
\end{align}
Figure \ref{fig:trianglecoeffs} shows the corresponding coefficient decay in the polynomial approximation which can be used as a proxy for whether the desired spatial convergence of a given application has been achieved, cf. \cite{olver_sparse_2019}. Stepping through $1000$ time steps with $L = 45$ and $K = 400$ takes roughly 143 milliseconds on a Lenovo ThinkPad X1 Carbon laptop with a 12th Gen Intel i5-1250P CPU.
    \begin{figure}\centering
     \subfloat[$\Delta$ operator.]
    {{ \centering \includegraphics[width=6.4cm]{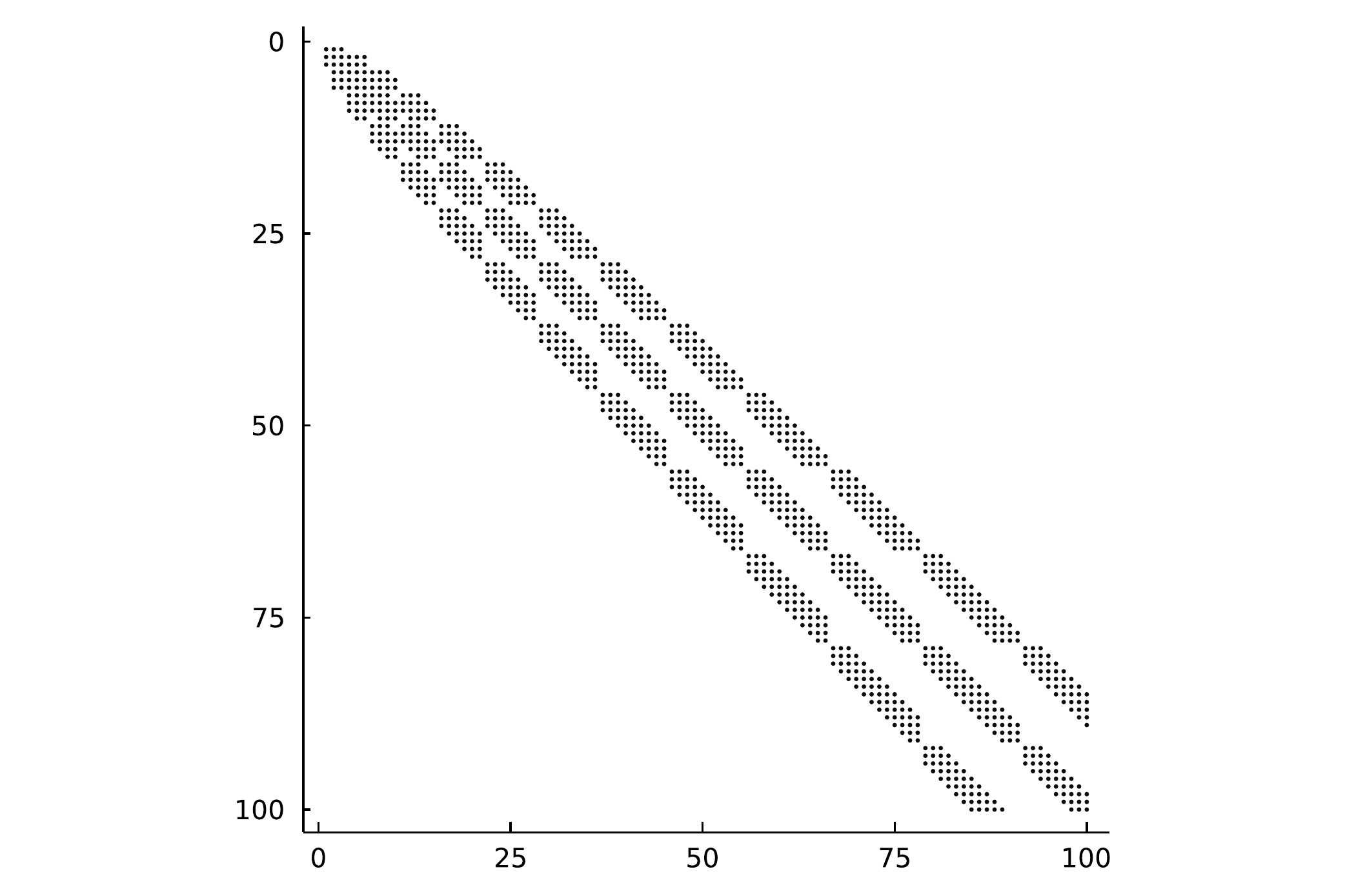} }}
     \subfloat[$\mathcal{C}$ operator.]
    {{ \centering \includegraphics[width=6.4cm]{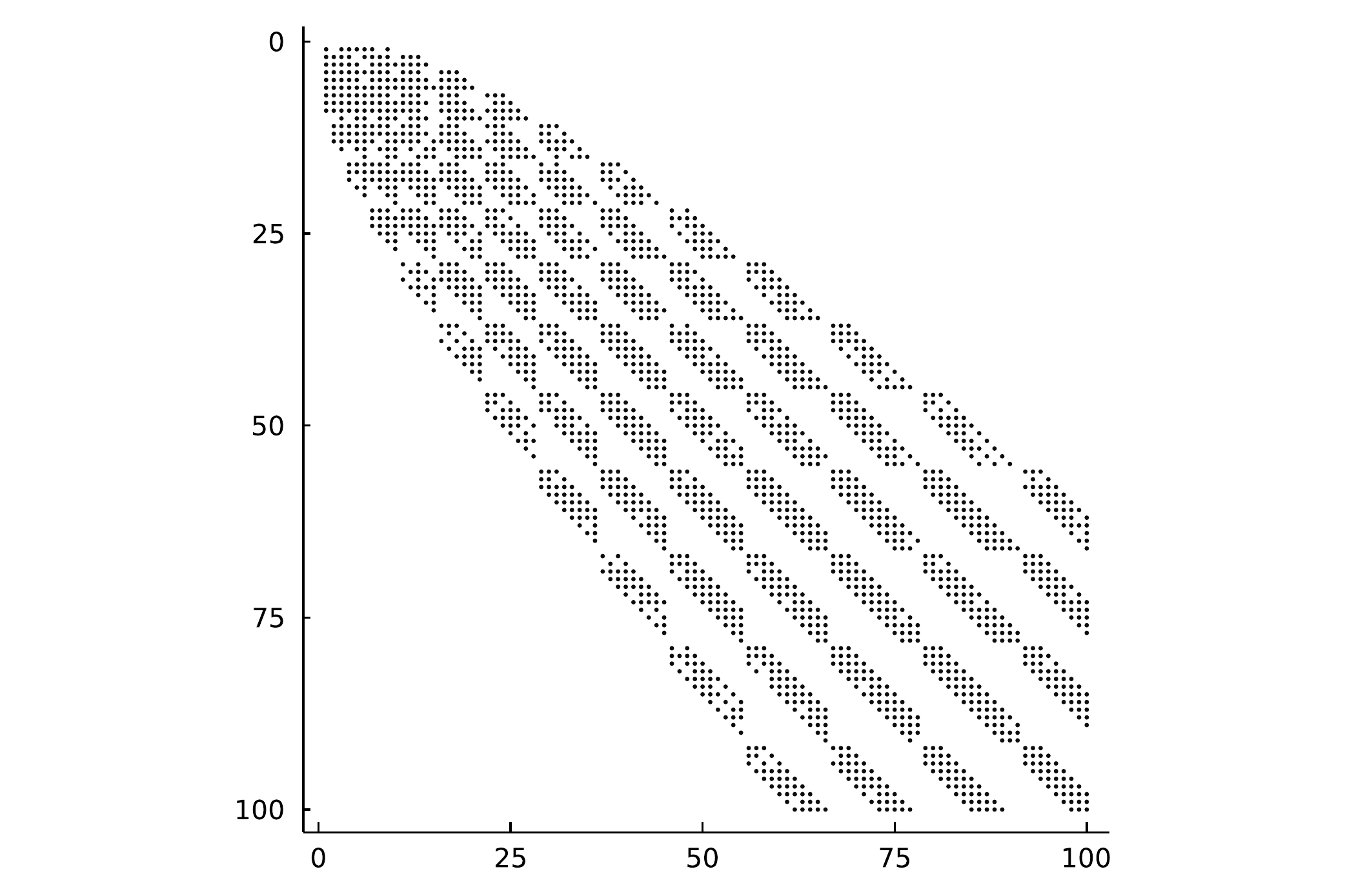} }}
    \caption{Sparsity pattern (spy) plots for $100\times 100$ blocks of the Laplacian $\Delta$ and conversion operator $\mathcal{C}$ appearing in Equation \eqref{eq:trianglemethod}. See \cite{olver_sparse_2019} for how to compute their entries.}
    \label{fig:spytriangle}
    \end{figure}
        \begin{figure}\centering
     \subfloat[$t = 0$]
    {{ \centering \includegraphics[width=6.4cm]{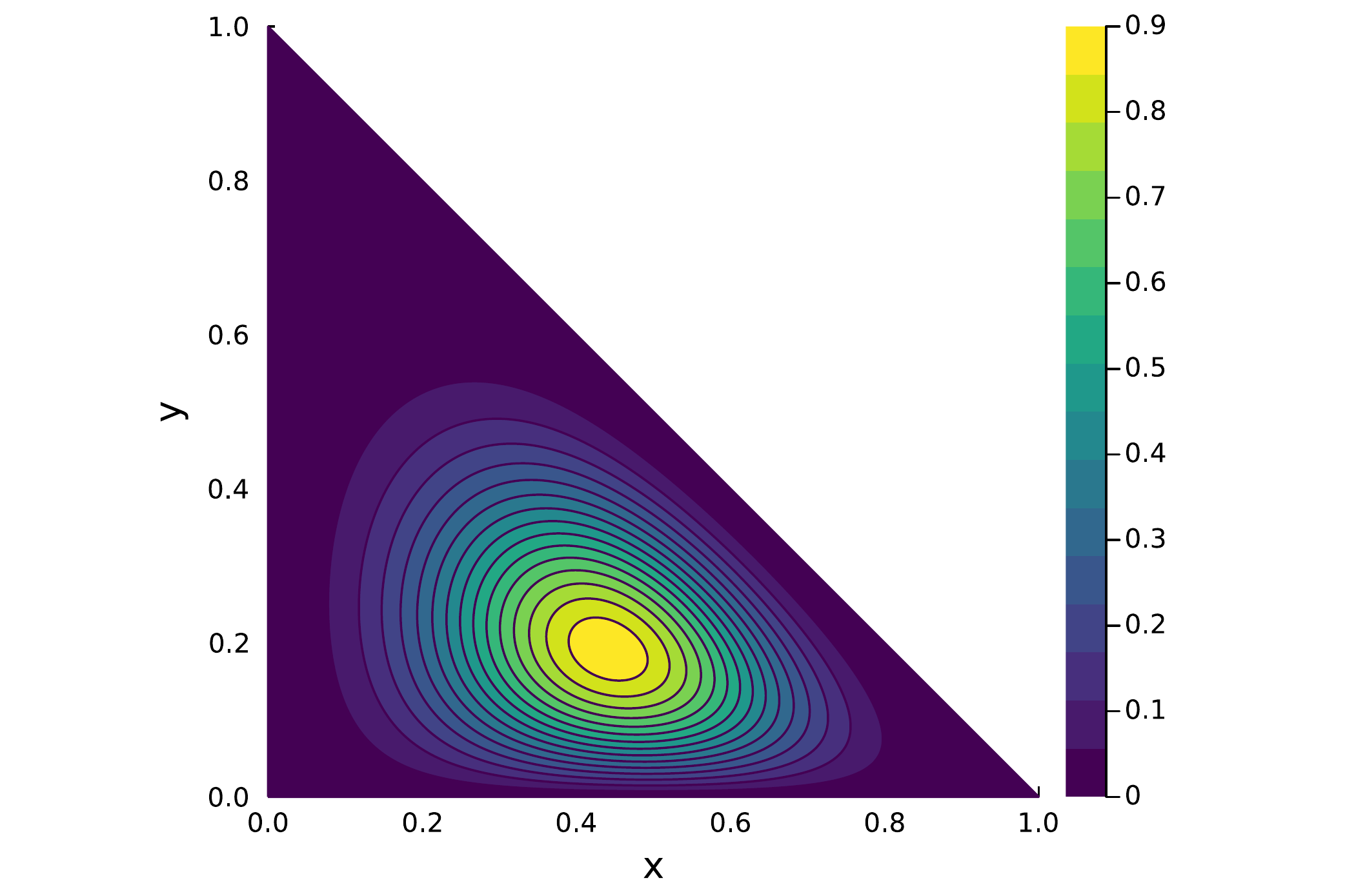} }}
     \subfloat[$t = \frac{1}{3}$]
    {{ \centering \includegraphics[width=6.4cm]{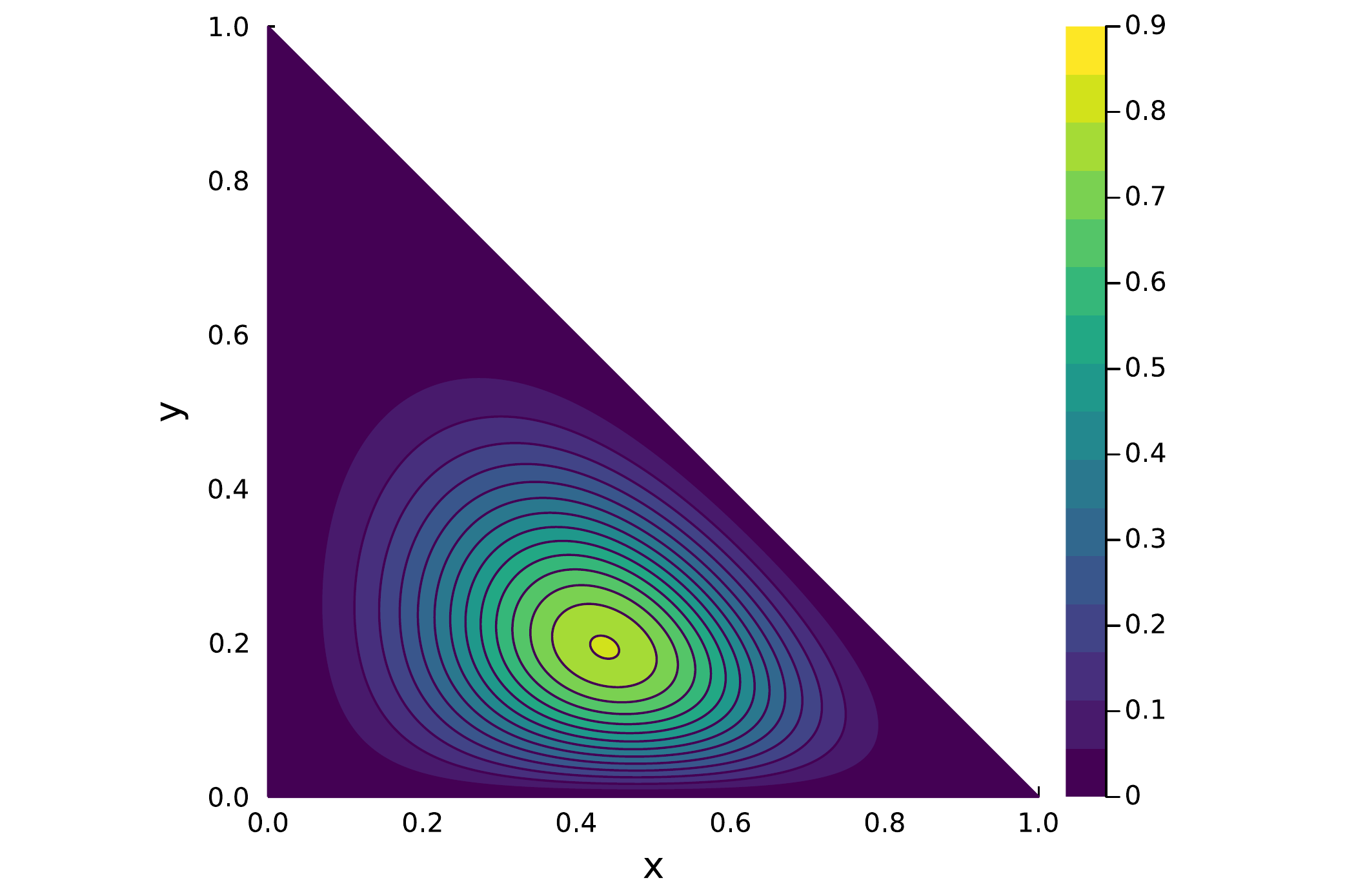} }}\\
         \subfloat[$t = \frac{2}{3}$]
    {{ \centering \includegraphics[width=6.4cm]{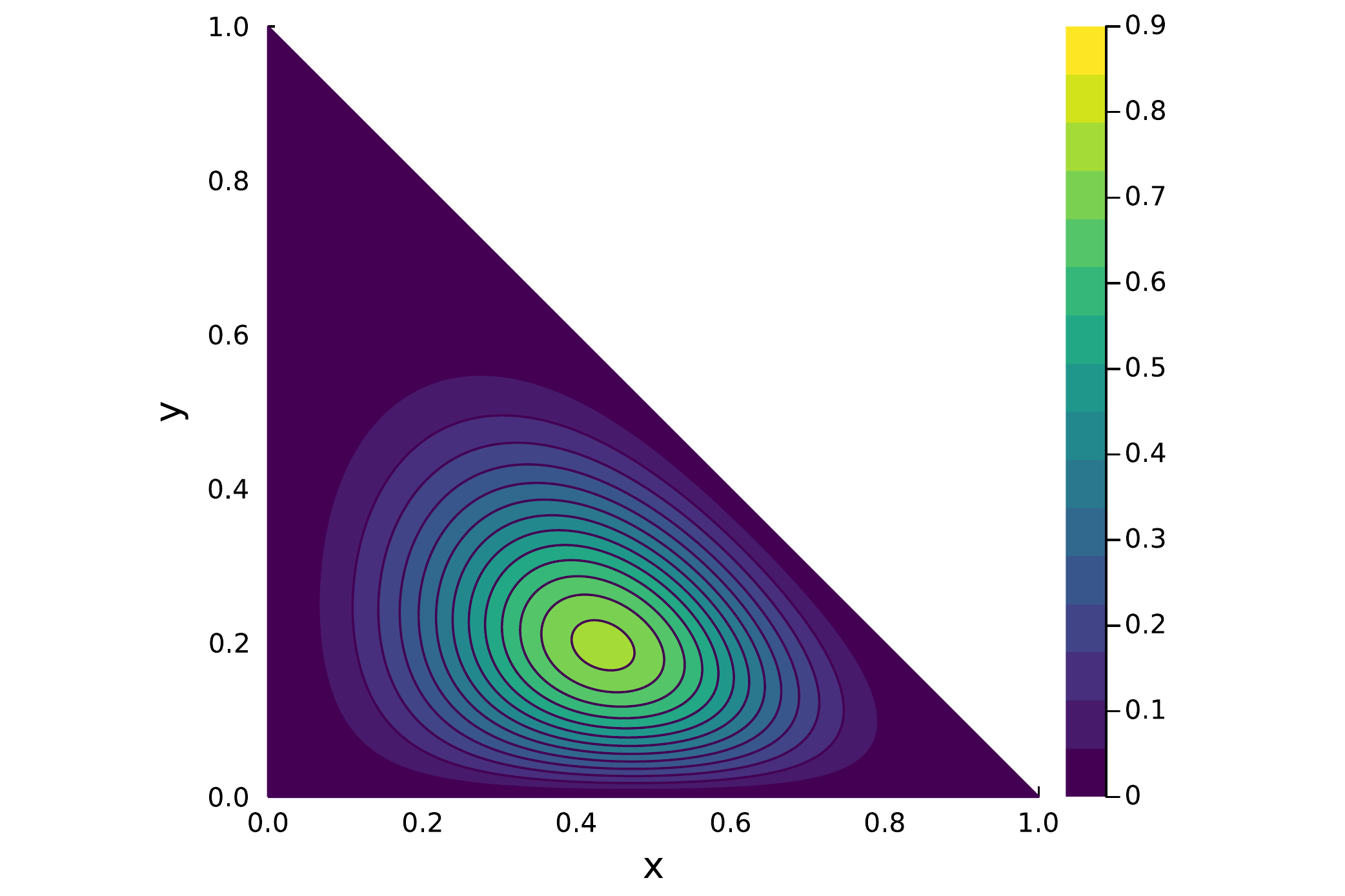} }}
     \subfloat[$t = 1$]
    {{ \centering \includegraphics[width=6.4cm]{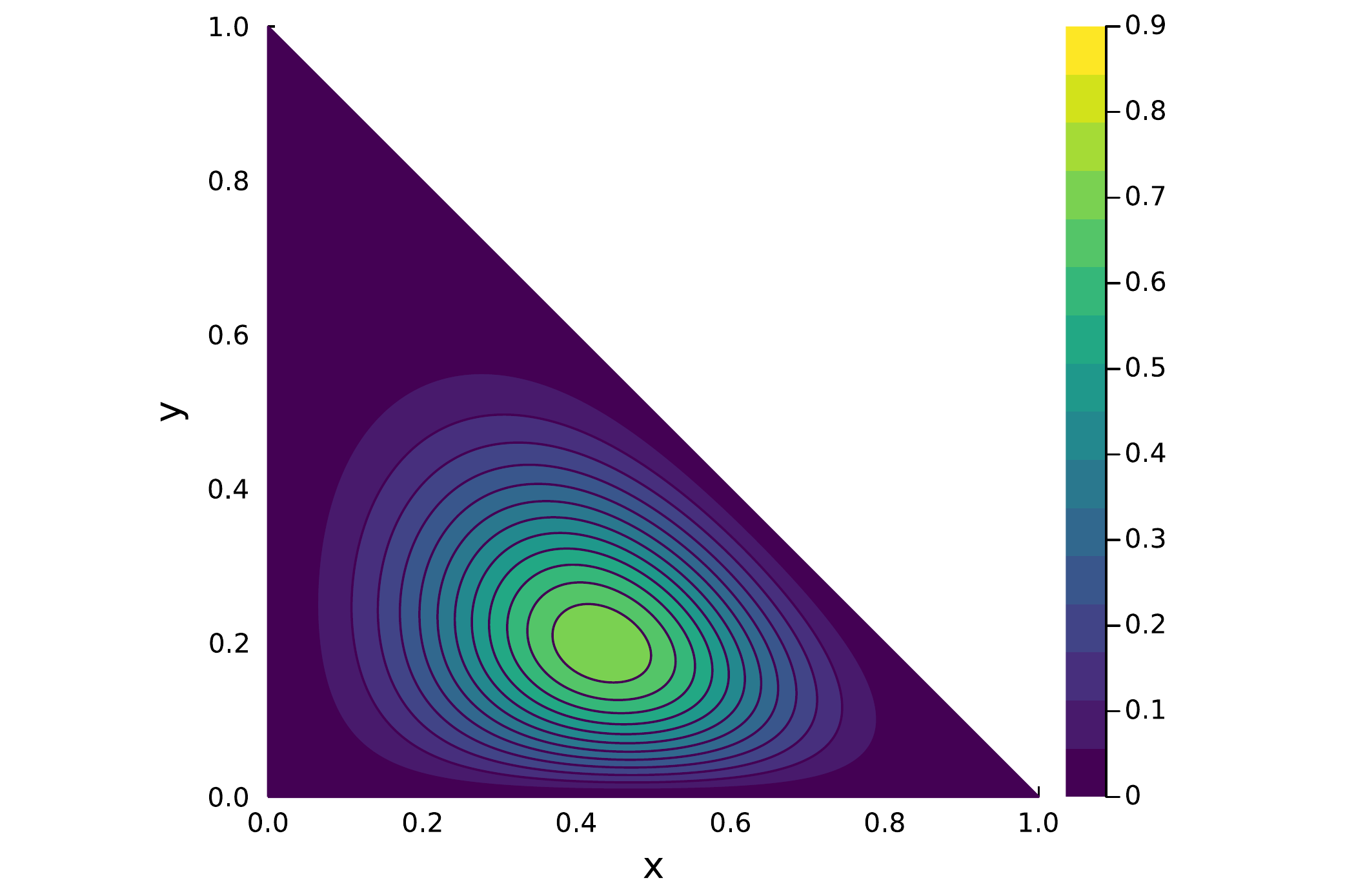} }}
    \caption{(b), (c) and (d) show numerical solutions at different times $t$ to the problem in Eq. \eqref{eq:trianglenumexp} with $\alpha = \frac{3}{5}$ and $k=\frac{1}{500}$, starting from the initial condition in Eq. \eqref{eq:initialconditiontriangle} which is shown in (a). The num. solutions are degree $K=400$ approximations computed with $L=45$ quadrature points and $\Delta t = 2^{-20}$. To make comparison easier the scale on all plots is kept consistent.}
        \label{fig:heattriangleexample}
    \end{figure}
\begin{figure}\centering
    {{ \centering \includegraphics[width=8.5cm]{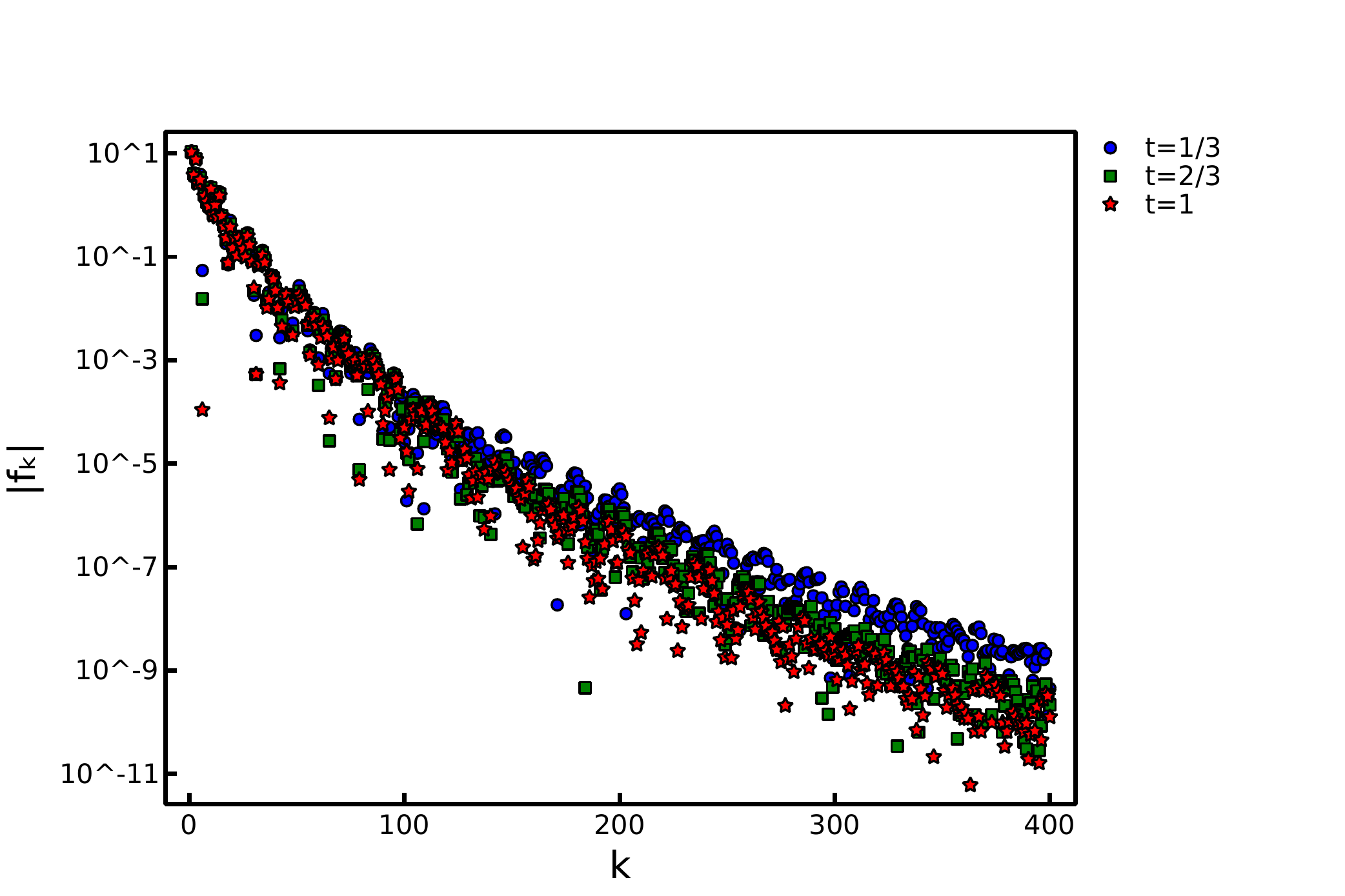} }}
    \caption{Semilogarithmic plot of absolute values of the $k$-th coefficients of the weighted Proriol polynomial approximations corresponding to the numerical solutions shown in Figure \ref{fig:heattriangleexample}.}
    \label{fig:trianglecoeffs}
    \end{figure}

\subsection{Experiment 4: A wave equation dampened by a fractional term on the disk}\label{sec:treebynumexp}
We consider a problem closely associated with the motivating model used in \cite{treeby2010modeling,treeby2014modeling} to describe medical ultrasound devices used on the human brain. As mentioned in the introduction, the equation of interest (cf. \cite{treeby2014modeling}) is
\begin{align*}
\frac{1}{c_0} \frac{\partial^2}{\partial t^2} f - \Delta f - \tau \frac{\partial^\alpha}{\partial t^\alpha} (\Delta f) = 0,
\end{align*}
posed on the three dimensional ball surrounded by multiple matching spherical shells of varying thickness, cf. the sphere head models described in e.g. \cite{naess2017corrected}. The presence of the term $\frac{\partial^\alpha}{\partial t ^\alpha} (\Delta f)$ as well as the requirement to decompose the domain into matching segments add complications to this problem which we intend to address in a separate paper. In this section we discuss the similar but slightly simplified equation
\begin{align}\label{eq:standinproblem}
\frac{1}{c_0^2}\frac{\partial^2}{\partial t^2} f - \Delta f + \tau\frac{\partial^\alpha}{\partial t^\alpha} f = 0,
\end{align}
posed on a two-dimensional disk with zero Dirichlet conditions and given initial conditions $f(0,\mathbf{x})$ and $\left(\frac{\partial}{\partial t}f\right)(0,\mathbf{x})$. Note that this stand-in equation also describes power law attenuation in viscous media -- it is in fact a modified version of Szabo’s wave equation, see \cite{szabo1994time,chen2003modified} which has also been suggested for use in applications, cf. \cite{holm2016wave}. To our knowledge no non-trivial explicit solutions for this equation are known.\\
As the problem is posed on a disk domain, we use \emph{generalized} Zernike polynomials $\mathbf{Z}^{(b)}(r,\theta)$. Zernike polynomials have a longstanding history \cite{zernike1935hyperspharische,zernike_beugungstheorie_1934} not just in mathematics research but also in natural science and engineering applications, especially in the field of optics \cite{thibos_standards_2000, roddier_atmospheric_1990,mahajan_zernike_1994,rocha_effects_2007,noll_zernike_1976}, and as result a handful of different and conflicting conventions exist for them. The convention we follow for the generalized Zernike polynomials is that $\mathbf{Z}^{(b)}(r,\theta)$ are orthonormal polynomials on the unit disk with respect to the weight function $w(r) = (1-r^2)^b$, whose radial components are given by orthonormal Jacobi polynomials $\tilde{\mathbf{P}}^{(\alpha, \beta)}(2r^2-1)$, i.e.:
\begin{align*}
Z_{\ell,m}^{(b)}(r,\theta) =  2^{\frac{\abs{m}+a+b+2}{2}} r^{\abs{m}}\tilde{P}_{\frac{\ell-\abs{m}}{2}}^{(b,\abs{m})}(2r^2-1) \sqrt{\tfrac{2-\delta_{m,0}}{2\pi}} \left\{\begin{array}{ccc} \cos(m\theta) & {\rm for} & m \ge 0,\\ \sin(\abs{m}\theta) & {\rm for} & m < 0.\end{array}\right.
\end{align*}
The Zernike polynomials are traditionally given in terms of polar coordinates but they are polynomials in $x$ and $y$ and generally not polynomials in $r$ and $\theta$. For a discussion of sparse spectral methods using Zernike polynomials, see e.g. \cite{vasil_tensor_2016}. For open source numerical implementations we refer to FastTransforms \cite{noauthor_fasttransforms_2021,noauthor_fasttransformsjl_2021} as well as MultivariateOrthogonalPolynomials.jl \cite{noauthor_multivariateorthogonalpolynomialsjl_2021}.\\
As in the triangle example, zero Dirichlet boundary conditions can be enforced by choosing an appropriate weighted basis $$\mathbf{W}^{(b)}(r,\theta) = (1-r^2)^b \mathbf{Z}^{(b)}(r,\theta),$$ to resolve the solution function at each time step $f(n\Delta t,r,\theta)$. Using a simple backward finite difference approximation for the second derivative we then find the following scheme for Equation \eqref{eq:standinproblem}:
\begin{align}\label{eq:diskmethod}
&\mathbf{Z}^{(1)}(r,\theta) \left(\left(\tfrac{1}{c_0^2 \Delta t} + \tau \left( \sum_{j=1}^L A_j \tfrac{1-e^{-s_j^2\Delta t}}{s_j^2} \right)\right) \mathcal{C} - (\Delta t) \Delta \right) \bm{f}^n = \\ &\mathbf{Z}^{(1)}(r,\theta) \mathcal{C}\left( \left( \tfrac{2}{c_0^2 \Delta t} + \tau \sum_{j=1}^L A_j \tfrac{1-e^{-s_j^2\Delta t}}{s_j^2}  \right) \bm{f}^{n-1} - \tfrac{\bm{f}^{n-2}}{c_0^2 \Delta t} - \tau \sum_{j=1}^L A_j e^{-s_j^2\Delta t} \bm{\psi}_j^{n-1} \right),\nonumber
\end{align}
with approximate solution given by $$f(N\Delta t, r, \theta) = \mathbf{W}^{(1)}(r,\theta) \bm{f}^N.$$ Higher order schemes may be obtained in the straightforward way. As in the triangle experiment, we plot a sparsity pattern plot for $\mathcal{C}$ in Figure \eqref{fig:spydisk} but omit the spy plot for the Laplacian $\Delta$ on these bases since it is in fact simply diagonal when mapping from $\mathbf{W}^{(1)}(r,\theta)$ to $\mathbf{Z}^{(1)}(r,\theta)$.
\begin{remark}
The second derivative in $t$ requires access to at least two previous points in time, one of which can be re-used in the Caputo part of the equation. As a result, while the minimum total memory requirement discussed in Section \ref{sec:memorycost} still holds, the memory \emph{increase} from adding a Caputo term to a wave equation is $K$ less than said minimum since $\bm{f}^{n-1}$ is a vector of length $K$ and need not be stored twice.
\end{remark}
We plot a numerical solution to Equation \eqref{eq:standinproblem} with parameters $\tau = 1$, $c = 100$, $\alpha = \frac{1}{2}$ and initial conditions
\begin{align}\label{eq:diskinitialnumexp}
f(0,x,y) &= 4y(1-x^2-y^2)^2,\\
\frac{\partial f}{\partial t}(0,x,y) &= 0.\nonumber
\end{align}
on the disk in Figure \ref{fig:diskexample1surface}. Four animations of solutions to this problem for various parameters and initial conditions, including the one pictured in Figure \ref{fig:diskexample1surface}, have been made available via FigShare, see \cite{gutleb_wave_2023}.\\
In Figure \ref{fig:diskimagingexample} we plot an example of an initial condition along with a circular array of 70 sensors and the data these sensors read out over time. State-of-the-art software such as the Matlab package k-wave, cf. \cite{treeby2010k,treeby2012modeling,treeby2018rapid,treeby2014modelling}, can use data from such sensor arrays to approximately reconstruct source images with natural applications in medical ultrasound as well as photo-acoustic imaging. This section serves as a proof of concept that a recursive sparse spectral element method may be applicable to such problems in the future -- we discuss additional research required to reach that point in the next section.

    \begin{figure}\centering
    \centering \includegraphics[width=8.5cm]{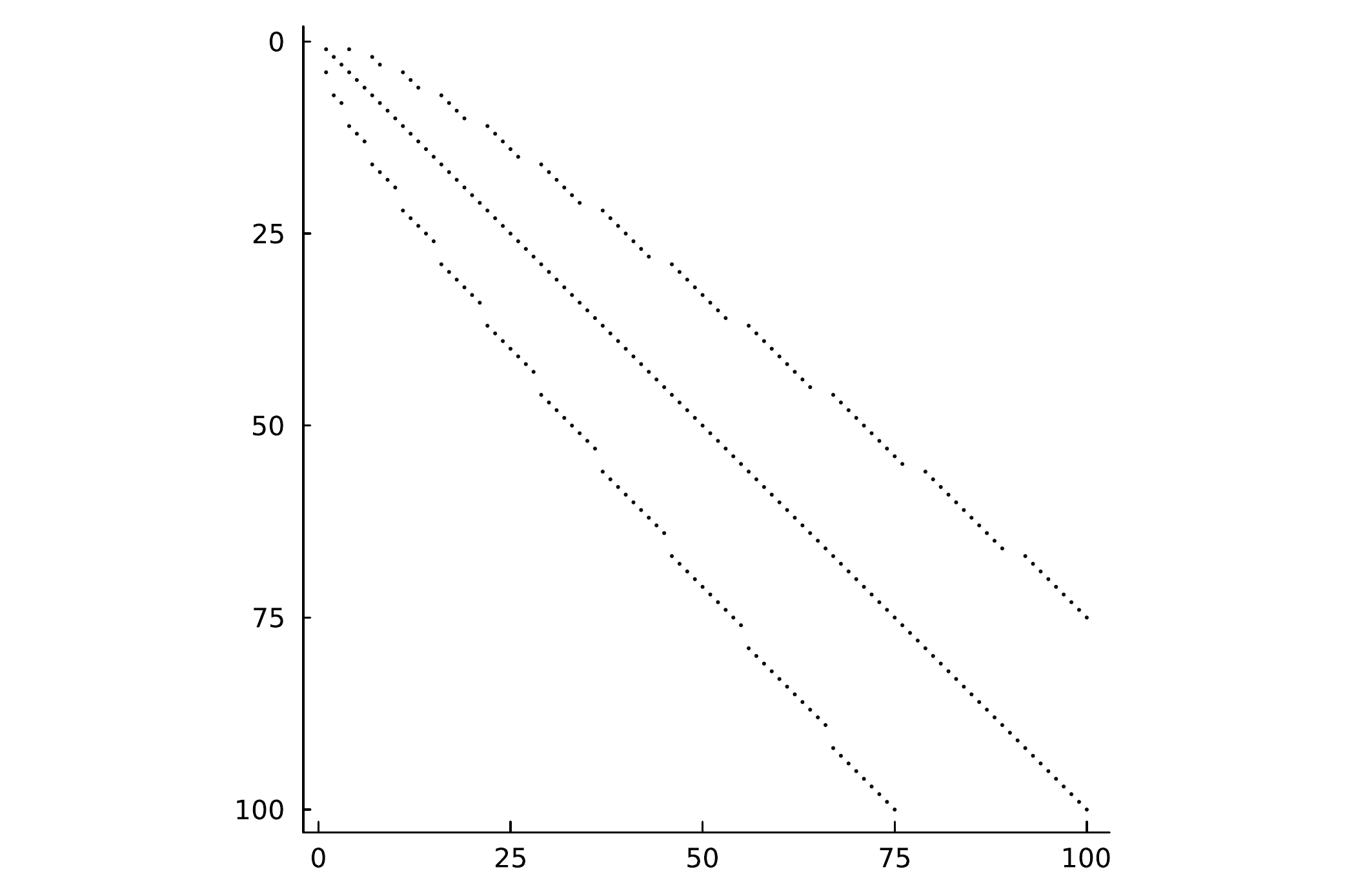}
    \caption{Sparsity pattern (spy) plot for $100\times 100$ block of the conversion operator $\mathcal{C}$ appearing in Equation \eqref{eq:diskmethod}. See e.g. \cite{vasil_tensor_2016} for how to compute its entries.}
    \label{fig:spydisk}
    \end{figure}
    \begin{figure}\centering
     \subfloat[$T = 0$]
    {{ \centering \includegraphics[width=7cm]{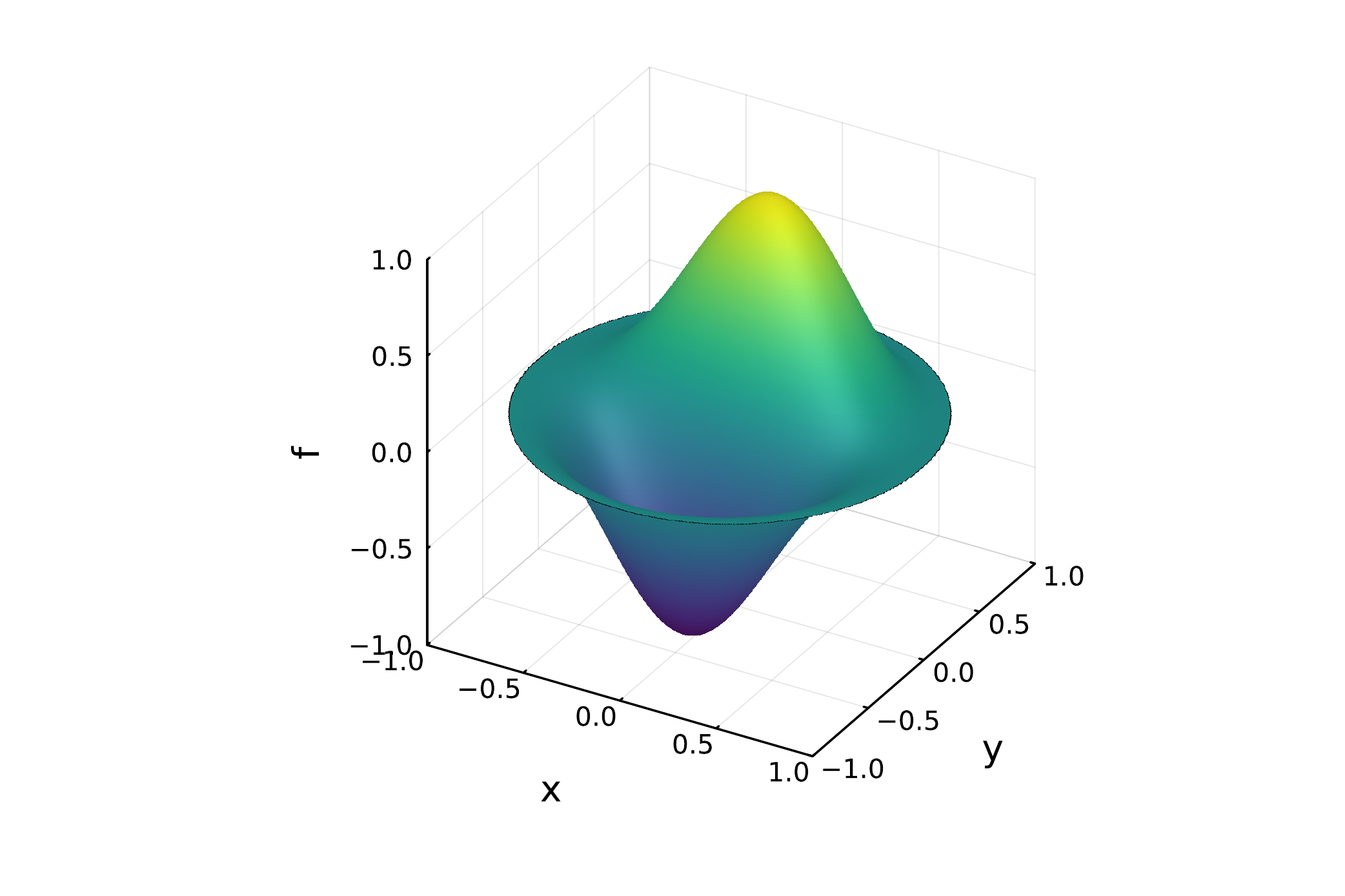} }}
     \subfloat[$T = 19200$]
    {{ \centering \includegraphics[width=7cm]{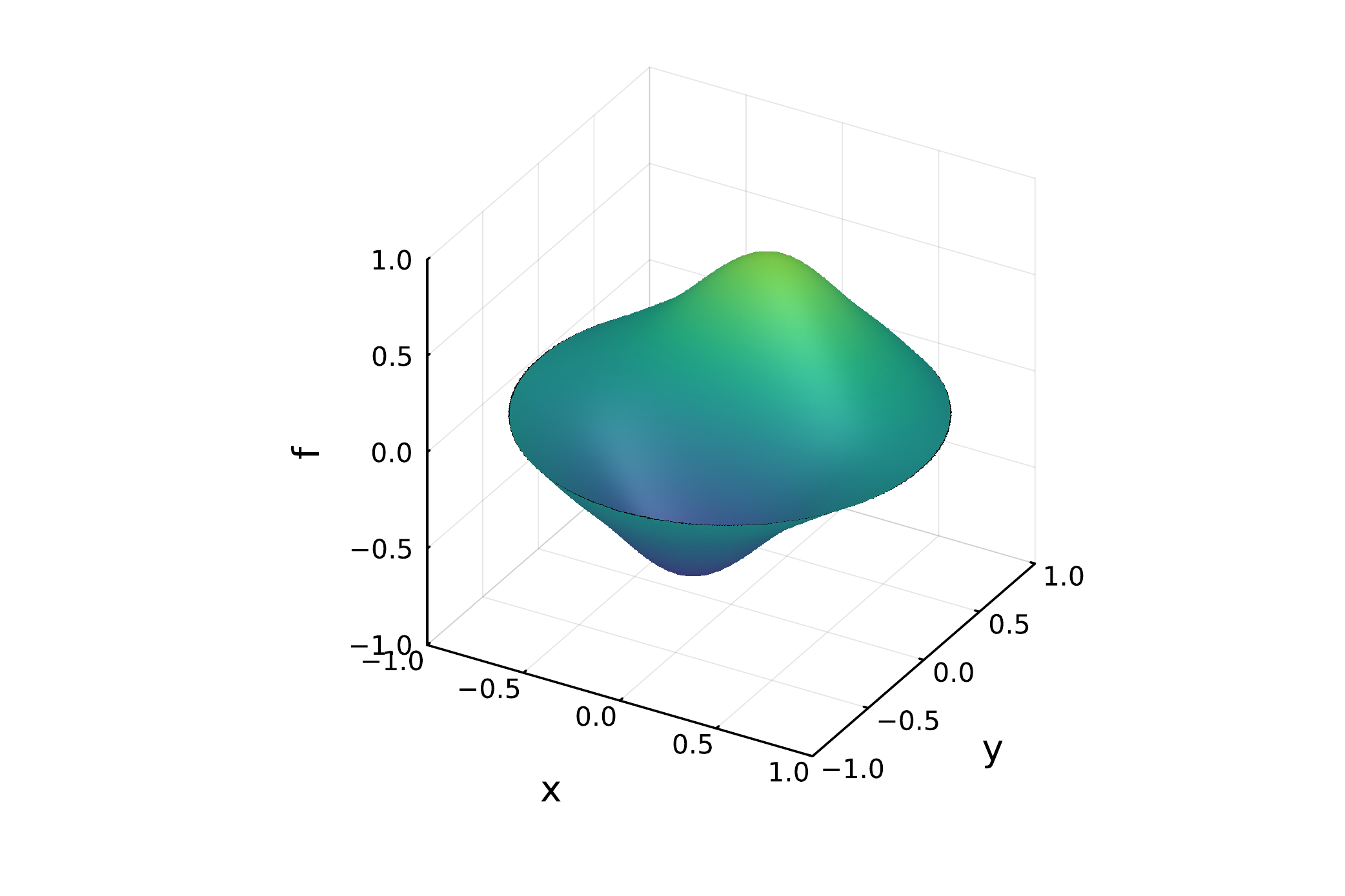} }}\\
         \subfloat[$T=38400$]
    {{ \centering \includegraphics[width=7cm]{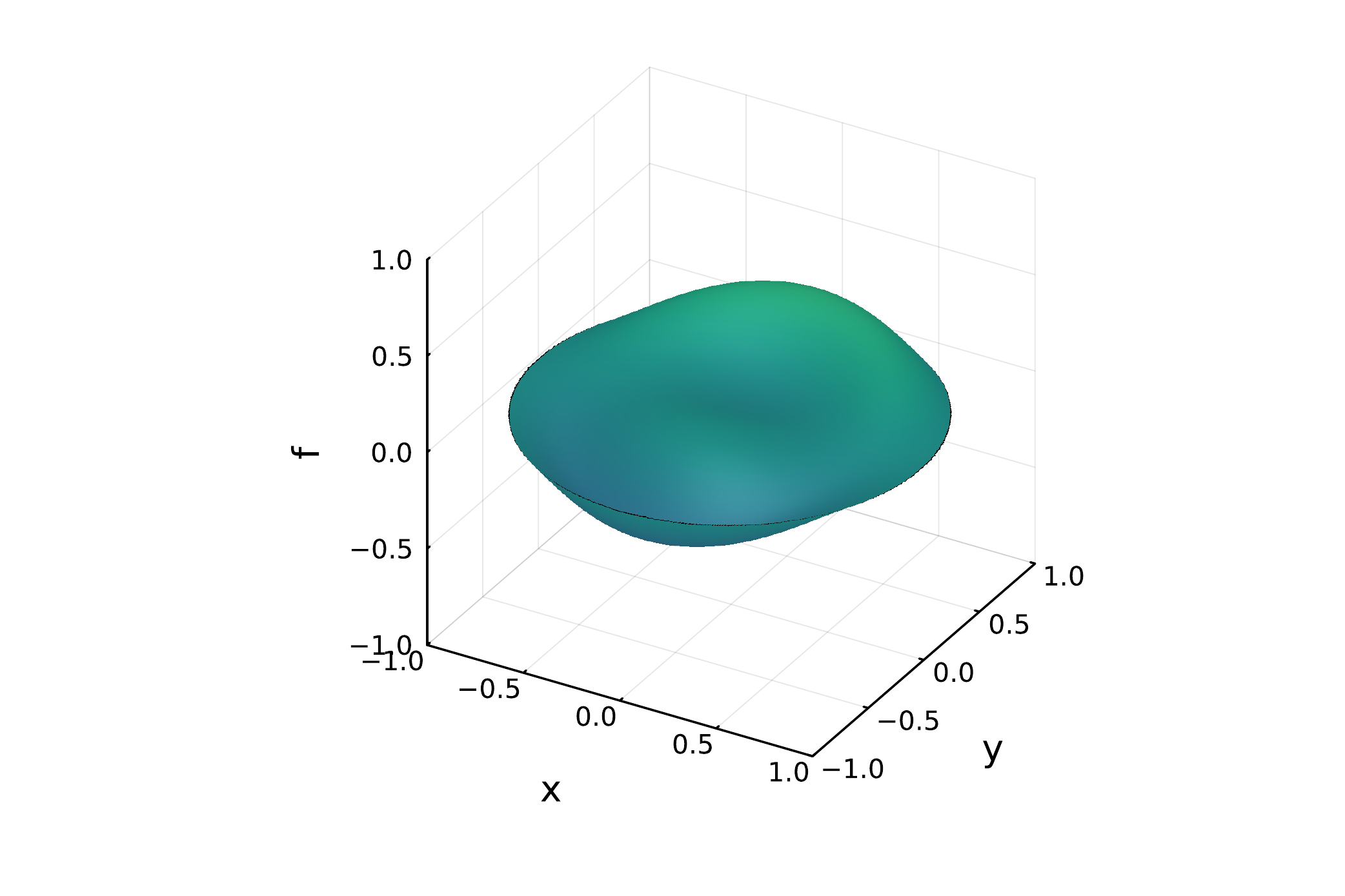} }}
     \subfloat[$T = 57600$]
    {{ \centering \includegraphics[width=7cm]{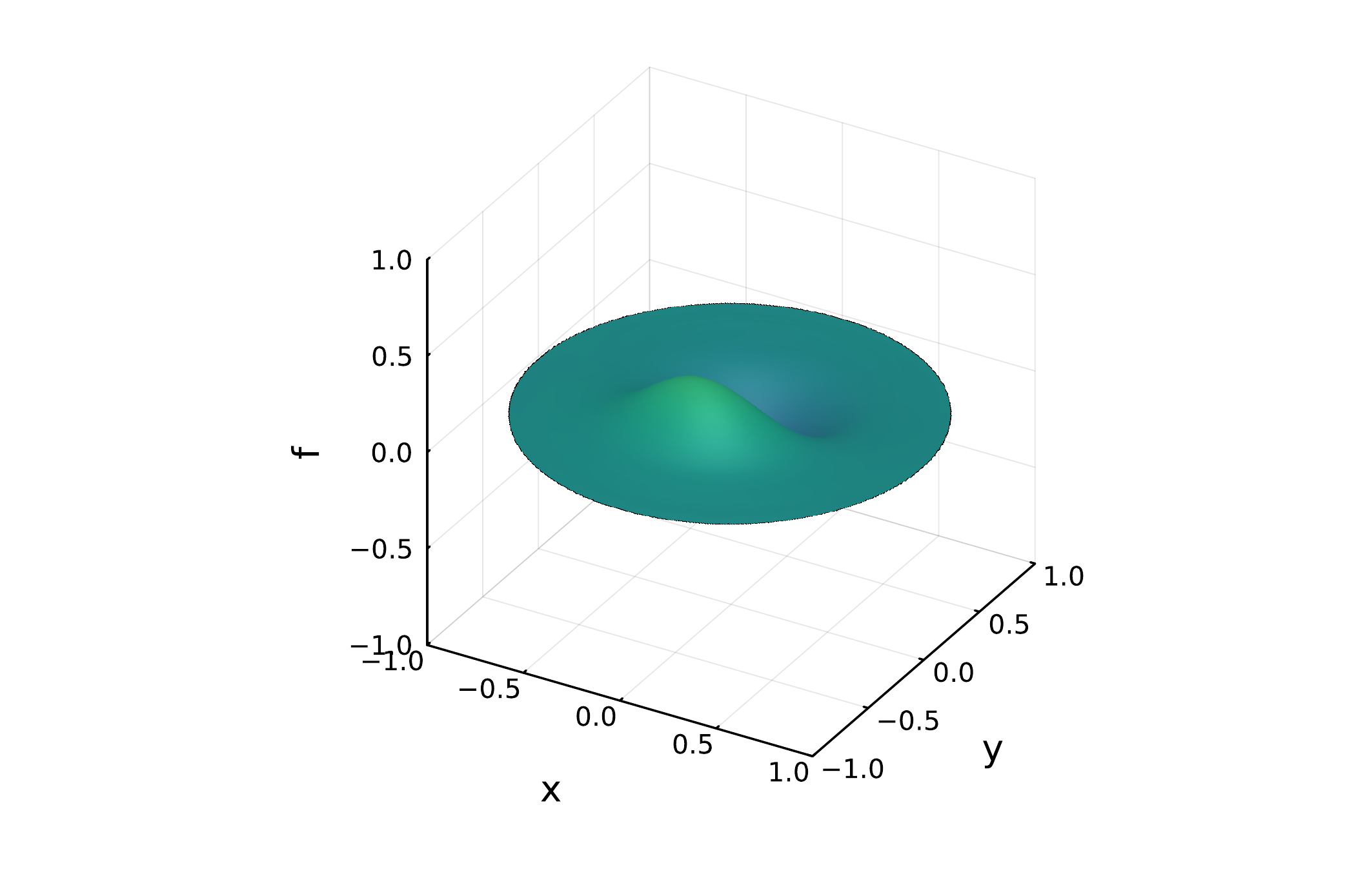} }}
    \caption{(b), (c) and (d) show numerical solutions to the fractionally dampened wave equation in Equation \eqref{eq:standinproblem} after indicated amount of time steps starting with the initial conditions in \eqref{eq:diskinitialnumexp} pictured in (a). The numerical parameters for the pictured simulation were $(\Delta t, K, L) = (2^{-20}, 500, 50)$. Animations for deeper time as well as other initial conditions are available on FigShare, see \cite{gutleb_wave_2023}.}
        \label{fig:diskexample1surface}
    \end{figure}
        \begin{figure}\centering
     \subfloat[]
    {{ \centering \includegraphics[width=7cm]{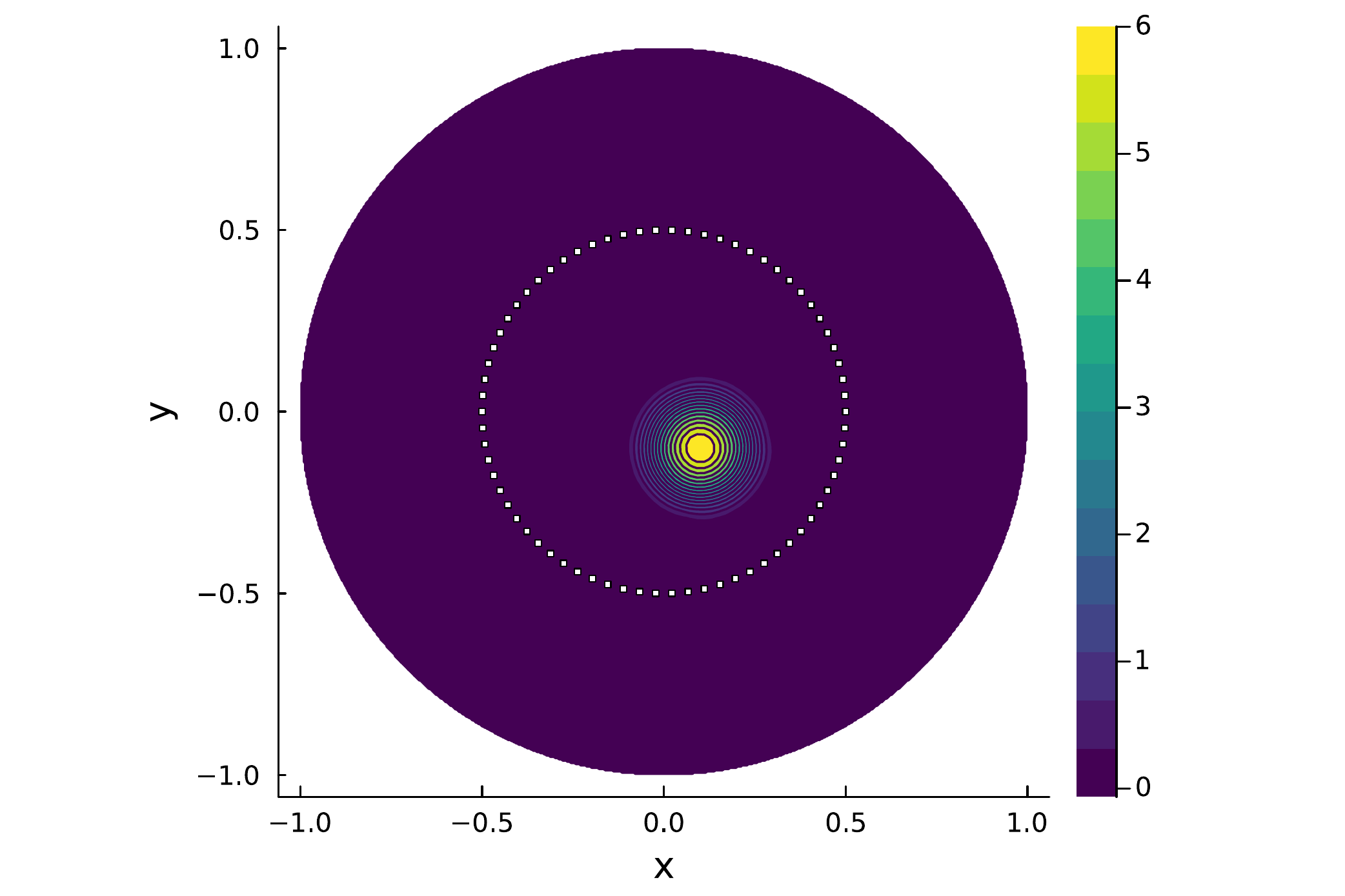} }}
     \subfloat[]
    {{ \centering \includegraphics[width=7cm]{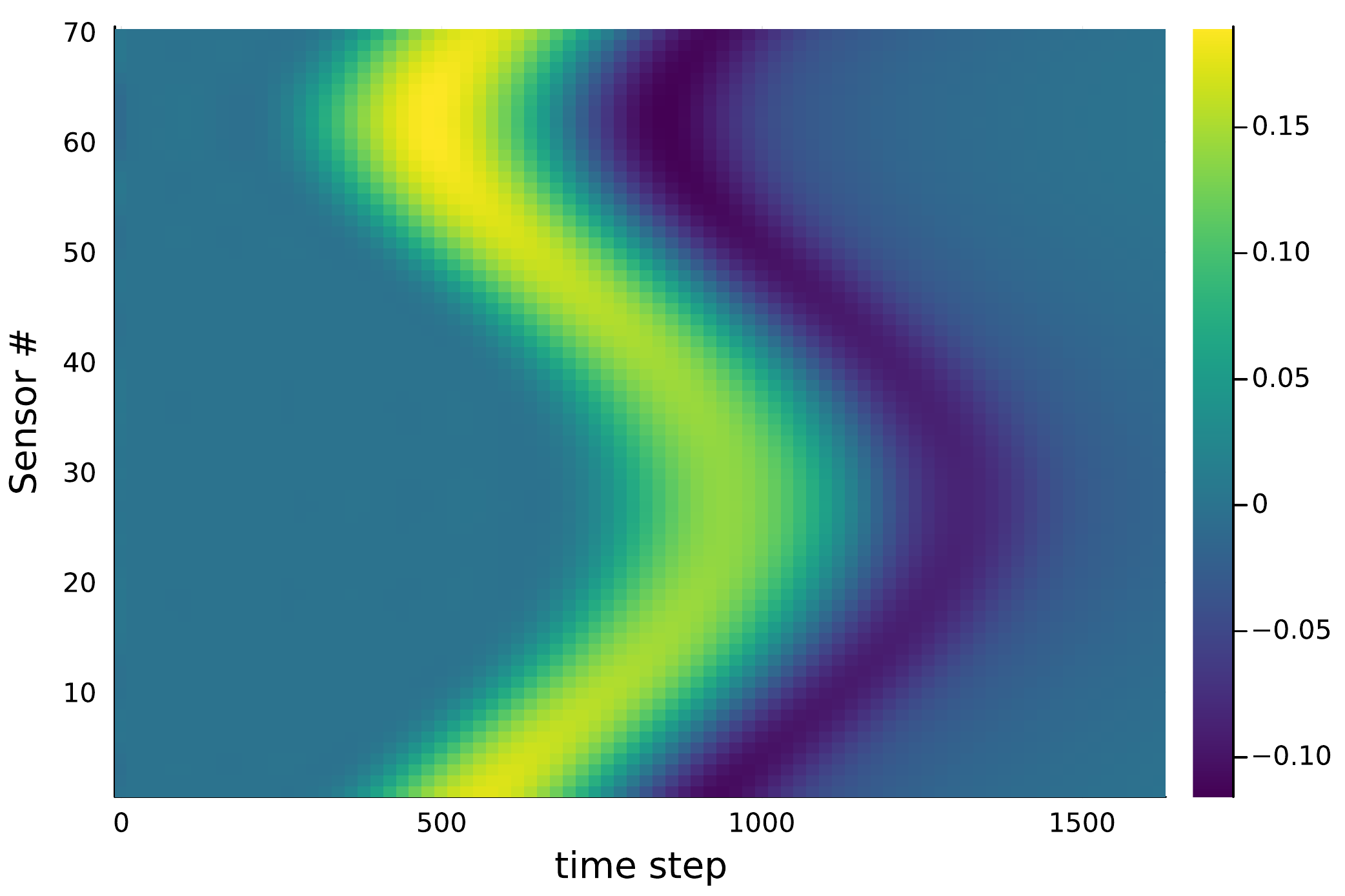} }}
    \caption{(b) shows the read-out as a function of time of the 70 sensors placed around the initial state plotted in (a) -- the circular sensor array with radius $r_s=\frac{1}{2}$ is marked by white squares in (a). The underlying equation is of the type in \eqref{eq:standinproblem} with $\alpha = \frac{1}{10}$. Inputs of the form of (b) can be used by the Matlab package k-wave to approximately reconstruct the original source images.}
        \label{fig:diskimagingexample}
    \end{figure}
\section{Discussion \& Future Research}\label{section:conclusion}
In this paper we have suggested a nonclassical sparse spectral approach to solving PDEs involving Caputo fractional derivatives. Among the method's core strengths are its recursive and history-free design, allowing computations that would otherwise be extremely challenging, and the reliance on sparse spectral methods in space resulting in banded operators which can be efficiently solved in each time step. The generic, almost `plug-and-play', nature of the approach means that it can easily be adapted to any domains on which sparse spectral methods are viable, i.e. all domains on which suitable sets of orthogonal polynomials and derivative operators are available. We demonstrated this by presenting numerical solutions to a time-fractional heat equation on the triangle and a time-fractional modification of the wave equation on the disk. As seen in the numerical experiments, the method does not require excessively many quadrature points to achieve good accuracy, with time step size $\Delta t$ becoming the primary error contributor once convergence in the number of quadrature points is achieved. This suggests that the approach outlined in this paper may in practice benefit significantly from replacing the various time stepping aspects of the method with higher order in time methods in a straightforward manner.\\
In future research we intend to adapt this method to work for the full equation of interest
\begin{align*}
\frac{1}{c_0^2} \frac{\partial^2}{\partial t^2}f - \Delta f - \tau \frac{\partial ^\alpha}{\partial t^\alpha} \Delta f = 0,
\end{align*}
which will require memory and computationally efficient extensions of this method for computing terms like $\frac{\partial^\alpha}{\partial t ^\alpha} (\Delta f(t,x))$ as well as a discussion of how to join up different domains with matching boundary conditions -- in this case combining a core ball cell with several layers of surrounding spherical shells of varying thickness. Applying such methods for the time reversed problem from finite sensor array readouts, as suggested with Figure \ref{fig:diskimagingexample} and the Matlab package k-wave, is of significant interest in applications. This functionality would require the use of theoretical results on the time-reversal of the discussed equations as well as an implementation of absorbing boundary conditions to prevent noise coming back from the domain boundary. To sensibly use absorbing boundary conditions with the present method, it is natural to e.g. use a spectral elements approach and enforce rapid decay in the outer-most element. We intend to discuss image reconstruction using a spectral element variant of this method in detail in a future paper.\\
While the above points remain a challenge for now, the introduced method's minimal theoretical overhead when changing domains suggest it is a good candidate for such multi-domain problems. Advances in this direction should be compared with the current state-of-the-art methods for solving these equations, which involve replacing the temporal nonlocality with a spatial nonlocality as discussed in \cite{treeby2014modeling} (as implemented in k-wave).

\section*{Acknowledgments}

TSG and JAC were supported by EPSRC grant number EP/T022132/1. JAC was supported by the Advanced Grant Nonlocal-CPD (Nonlocal PDEs for Complex Particle Dynamics: Phase Transitions, Patterns and Synchronization) of the European Research Council Executive Agency (ERC) under the European Union’s Horizon 2020 research and innovation programme (grant agreement No. 883363). JAC was also partially supported by the EPSRC grant number EP/V051121/1. The authors wish to thank Ioannis P. A. Papadopoulos and Sheehan Olver for providing helpful feedback on an early draft of the paper and Bradley Treeby for helpful conversations about medical ultrasound, k-wave's functionality and fractionally dampened wave equations.

 \bibliographystyle{elsarticle-num} 
 \bibliography{cas-refs}





\end{document}